\newcommand{\R}{\mathbb{R}}
\newcommand{\C}{\mathbb{C}}
\newcommand{\re}{\operatorname{Re}}
\newcommand{\im}{\operatorname{Im}}
\theoremstyle{plain}
\newtheorem{theorem}{Theorem}[section]
\newtheorem{lemma}[theorem]{Lemma}
\newtheorem{proposition}[theorem]{Proposition}
\theoremstyle{definition}
\newtheorem{remark}[theorem]{Remark}
\def \cH {\mathcal{H}}
\begin{document}

\title[Gyrating H'-T surfaces]{The chiral gyrating H'-T surface family:\\ construction from the dual \texttt{qtz}--\texttt{qzd} nets and\\ existence proof using a toroidal Weierstrass method}

\author{Hao Chen}
\author{Shashank G.\ Markande}

\author{Matthias Saba}

\author{Gerd E. Schr\"oder-Turk}

\author{Elisabetta A.\ Matsumoto}
\address[Chen]{ShanghaiTech University, Institute of Mathematical Sciences, 201210  Pudong New District, Shanghai, China}
\email[Chen]{chenhao5@shanghaitech.edu.cn}
\address[Markande, Matsumoto]{School of Physics, Georgia Tech, Atlanta, GA 30318, USA}
\address[Saba]{Adolphe Merkle Institute, University of Fribourg, 1700, Switzerland}
\address[Schr\"oder-Turk]{Murdoch University, School of Mathematics, Statistics, Chemistry and Physics, Perth, Australia; The Australian National University; Research School of Physics, Department of Materials Science, Canberra, Australia}
\email[Schr\"oder-Turk]{g.schroeder-turk@murdoch.edu.au}

\keywords{minimal surfaces, bicontinuous structures, Weierstrass parametrization, spatial nets, duality, Scherk surface}

\begin{abstract}
	This paper provides a construction and existence proof for a 1-parameter family
	of chiral unbalanced triply-periodic minimal surfaces of genus 4. We name these
	{\textit{gyrating H'-T} surfaces, because they are related to Schoen's H'-T
		surfaces in a similar way as the Gyroid is to the Primitive surface. Their
		chirality is manifest in a screw symmetry of order six. The two labyrinthine
		domains on either side of the surface are not congruent, rather one
		representing the quartz net (\texttt{qtz}) and the other one the dual of the
		quartz net (\texttt{qzd}). The family tends to the Scherk saddle tower in one
		limit and to the doubly periodic Scherk surface in the other. The motivation
		for the construction was to construct a chiral tunable unbalanced surface
		family, originally as a template for photonic materials. The numeric
		construction is based on reverse-engineering of the tubular surface of two
	suitably chosen dual nets, using the \textit{Surface Evolver}} to minimize area
	or curvature variations.  The existence is proved using Weierstrass
	parametrizations defined on the branched torus. 
\end{abstract}

\maketitle

Triply-periodic minimal surfaces (TPMSs) are periodic symmetric-saddle surfaces
that divide space into two network-like domains.  They are epitomized by Alan
Schoen's \textit{Gyroid} surface \cite{Schoen_1970,Schoen_2012}. The
interdisciplinary interest in triply-periodic bicontinuous structures is driven
by the occurrence of related structures in chemistry and biology
\cite{Hyde_1997,hydeUbiquitousSRS} and as a useful material design such as the
now common 'gyroid infill' in 3d printing applications. The cubic (G)yroid and
its two close relatives, namely Schwarz' (D)iamond and (P)rimitive surfaces
are, among the many triply-periodic minimal surfaces, those that are most
widely found in nano- or micro-structured materials and tissues.

The study of TPMS has advanced through contributions from both the mathematical
and natural sciences. The productivity of this relationship, although perhaps
not always fully recognized \cite{SchoenObituary}, is epitomized by Schoen's
discovery of his Gyroid. Schoen, a trained physicist employed by NASA,
constructed the Gyroid using an 'infinite polyhedral model' and devised a
Weierstrass parametrization for the Gyroid \cite{Schoen_1970,Schoen_2012}),
with mathematicians Gro{\ss}e-Brauckmann and Wohlgemuth later proving its
existence and that it is a minimal embedding \cite{kgb1996}.
Gro{\ss}e-Brauckmann went on to establish the existence (and partial
uniqueness) of constant-mean-curvature companion surfaces to the gyroid
\cite{KGB_1997} as well as numerical constructions of these that have since
been used in physical applications
\cite{ShearmanSeddonCMC2007,SchrderTurk2013}.

\begin{figure*}[t!]
  	\centering
  	\includegraphics[width=\textwidth]{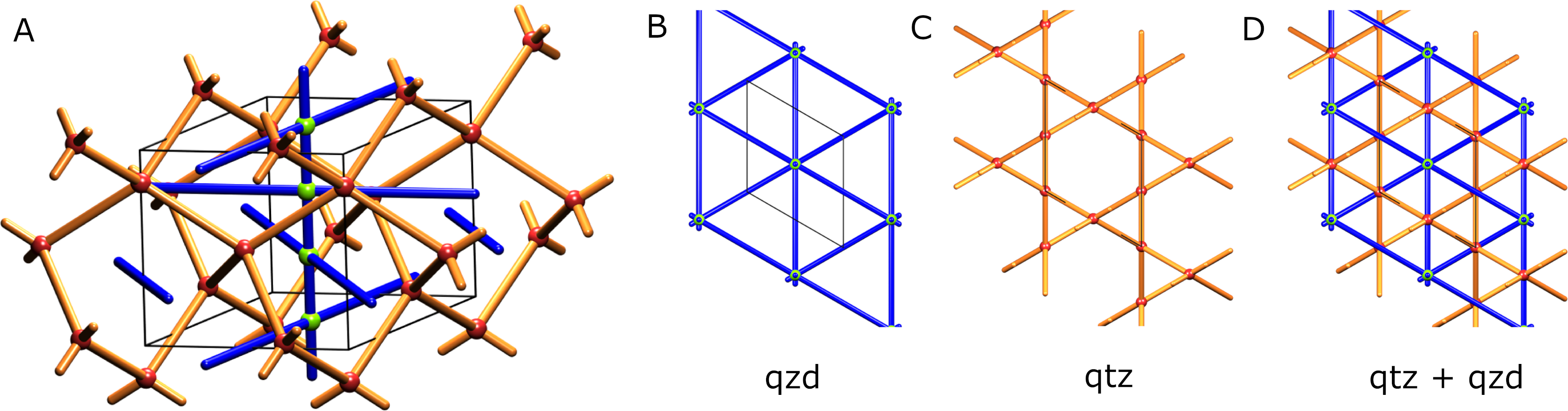}
		\caption{
			The pair of interpenetrating dual nets that lead to the
			\texttt{qtz}-\texttt{qzd} surface family, the quartz (\texttt{qtz}) net
			(orange) and its proper dual {\texttt{qzd}} network (blue). (A) Perspective
			view; (B-D) top views along the $c$ axis of the \texttt{qzd} (A),
			\texttt{qtz} (B) \texttt{qzd} and \texttt{qtz} nets. The translational unit
			cell ($P6_222$) is the gray frame.
		}
		\label{fig:qtz-qzd-nets}
\end{figure*}

A similar interdisciplinary interplay established our understanding of families
of TPMSs that contain the Gyroid, Diamond, or Primitive surfaces as particular
members, and therefore provide transition pathways between these surfaces
within the space of TPMSs. Meeks mathematically described a five-parameter
family containing the cubic P and D surfaces, but not the Gyroid
\cite{Meeks1975,meeks1990}. Hyde and Fogden enumerated Weierstrass
parametrisations of an extended set of these surfaces \cite{Fogden_i_1992,
Fogden_ii_1992, Fogden_iii_1993} including surface families rG and tGL
containing the Gyroid \cite{fogden1999}; these were later examined
mathematically by Weyhaupt \cite{weyhaupt2008} and their existence eventually
proved by Chen \cite{Chen_2021}. Numerical analysis of their packing and
curvature properties was carried out by Fogden \& Hyde \cite{fogden1999} and
others \cite{schroderturk2006, SchrderTurk2011,Mickel2012}. Recent work in
mathematics and materials science  has identified further deformation families,
particularly an alternative deformation family of the Diamond surface
\cite{Chen2021a} that is different from the classical tD family, an
orthorhombic deformation family of Schwarz' H surfaces \cite{Chen2021b}, and a
deformation of a (shifted) Gyroid \cite{Wang2024}; this deformation may relate
to the recent finding that the cubic Diamond is merely a saddle-point (rather
than an unstable minimum) of the free-energy functional for a copolymeric model
\cite{DimitriyevGrason2025}.

Over the years, research in both natural sciences and mathematics has proposed
numerous ways to describe, enumerate or parametrize new TPMSs.  In the natural
sciences, there are approaches based on space group or crystallographic
symmetries by Fischer $\&$ Koch (\cite{Koch_1988} and other papers in that
series) and Lord $\&$ Mackay \cite{Lord_2003}, on tilings of hyperbolic plane
by Sadoc $\&$ Charvolin \cite{Sadoc_1989},  on Weierstrass formula derived from
Schwarz triangular tilings of the two sphere by Fogden $\&$ Hyde
\cite{Fogden_i_1992, Fogden_ii_1992}, and other explicit parametrizations
\cite{Gandy_i_2000}.  In mathematics, there are approaches based on a
Schwarz--Christoffel formula for periodic polygons in the plane by Fujimori
$\&$ Weber \cite{Fujimori_2009}, on the concept of discrete minimal surfaces
\cite{Pinkall_1993,Karcher_1996}, on gluing methods~\cite{traizet2008,
chen2022, chen2024}. New TPMSs were recently constructed by deliberately
breaking symmetries \cite{Chen2021a, Chen2021b, Chen_2021}.

In addition to exact descriptions, methods to approximate or simulate TPMSs
have been instrumental in their exploration: Nodal surface formulae that
approximate minimal or CMC surfaces \cite{vonSchnering1991} have become popular
for their ease of use. Furthermore, a tool that has been widely used in
mathematics and natural sciences for the exploration of TPMSs is Kenneth
Brakke's \textit{Surface Evolver} \cite{Brakke_1992}. This software package
uses a conjugate gradient solver to minimize energy functionals (including the
area functional, Willmore functional, etc.) on a surface, and integrates
functionality for crystallographic symmetries, boundary conditions, volume
constraints, etc. Brakke's extensive TPMS library is testimony to its
usefulness \cite{BrakkeTPMSWebArchive} as are many other applications in the
context of area-minimizing surfaces \cite{KGB_1997,Hyde_2009} and its use in
this work. 

Some properties of a minimal surface relate to intrinsic properties, unrelated
to the embedding in $\mathbb{E}^3$. In particular, the Gauss curvature $K$, the
integral of which provides the Euler characteristic and the variations of which
are a homogeneity measure relevant for self-assembly \cite{Hyde_1990}, is an
intrinsic property. There are also many extrinsic properties. In particular,
the relationship between the minimal surface interface and the two skeletal
(medial) graphs or surfaces often used to illustrate the connectivity and
geometry of the two labyrinthine domains on either side of the surface. 

This paper is the result of another interplay between mathematics and natural
sciences on the topic of TPMSs: We here describe the numerical and analytic
constructions of a TPMS without in-surface symmetries (that is, without mirror
planes and without two-fold rotation axes embedded in the surface). The absence
of such symmetries means that a surface patch bounded by straight lines or a
plane line of curvature does not exist; therefore a bottom-up approach starting
from a Plateau patch is not possible. 

Our numerical construction of the gyrating H'-T is a top-down approach. We
start with two specific interpenetrating nets (namely the chiral
\texttt{qtz}--\texttt{qzd} pair), with specific topology and symmetry, with the
goal of determining a minimal surface interface that separates those nets. We
determine a mesh approximation using a numerical construction in Surface
Evolver (based on a procedure suggested by de Campo, Hyde et al
\cite{Hyde_2009}), which is good enough to determine the location and nature of
the flat points (those with zero Gauss curvature).

Based on that information, we determine a Weierstrass parametrization of the
surface. It turns out that this can be done in one of two ways. One way,
explored in an earlier unpublished version of this manuscript by a subset of
the current authors (available as an arXiV preprint \cite{preprint}), is using
the knowledge of the flat points to determine a Weierstrass function in the
spirit of work by Fogden \& Hyde \cite{Fogden_i_1992,Fogden_ii_1992},
particularly in relation to the so-called \textit{irregular} class
\cite{Fogden_iii_1993}. This is based on the ``traditional'' spherical
Weierstrass parametrization, which is the inverse of the stereographically
projected Gauss map.

Before joining the project, Chen had been working on the existence proof of the
tG and rGL families~\cite{Chen_2021} and a construction of minimal surfaces by
gluing Scherk saddle towers~\cite{chen2024}.  Upon reading~\cite{preprint}, he
noticed that the Scherk tower limit of the family can be produced by the gluing
method~\cite{chen2024}.  Moreover, he recognized that the gyrating H'-T
surfaces are to the H'-T surfaces what the tG surfaces are to the tP surfaces.
Therefore, the existence can be proved using a Weierstrass parametrization
defined on a torus, see section \ref{sec:Weierstrass-construction-hao}, in a
similar manner as was done for the deformation families of the Gyroid
\cite{Chen_2021}.

\section{Numerical construction from the dual interthreaded
\texttt{qtz}--\texttt{qzd} nets}

We construct a unique triply-periodic minimal surface from a dual pair of
interpenetrating nets (specifically the quartz net and its dual). A tubular
representation of one of the nets is 'evolved' numerically to a TPMS. One of
its two disjoint domains is homeomorphic to a tubular neighbourhood of the
\texttt{qtz} net while the other is homeomorphic to the \texttt{qzd} net. 

\subsection{Dual pair of \texttt{qtz} and \texttt{qzd} nets}

\begin{figure*}[b]
	\centering
	\includegraphics[width=\textwidth]{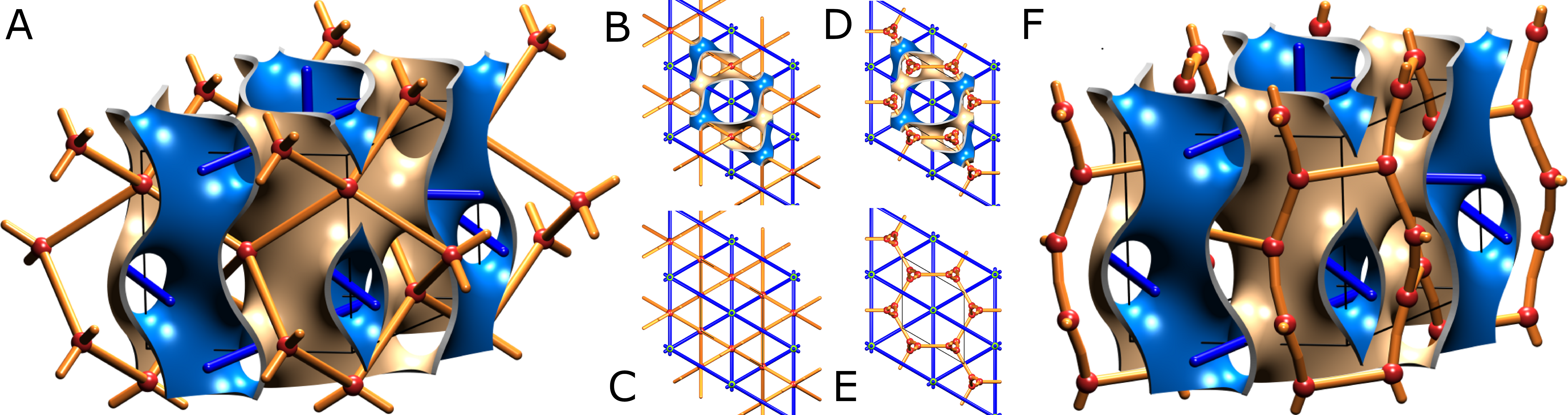}
	\caption{
		The numerically evolved minimal surface that separates the \texttt{qtz} and
		the \texttt{qzd} net. (A-C) the networks shown are the \texttt{qtz} and the
		\texttt{qzd} net. (D-F) the networks shown are the medial axes as
		calculated from the minimal surface, as the curves tracing the center lines
		that are most distance from the minimal surface.
	}
	\label{fig:qtz-qzd-surf-with-medax}
\end{figure*}

We consider the pair of dual interpenetrating 3D networks known as the quartz
(\texttt{qtz}) net and the \texttt{qzd} net \cite{DelgadoF_2_2003}, both
documented in reticular chemistry structure resource (RCSR,
\cite{OKeeffe_2008}), see Figure \ref{fig:qtz-qzd-nets}. 

The \texttt{qtz} net has symmetry group $P6_222$, with identical
four-coordinated vertices, all at symmetry site 3c (in Wyckoff notation), and
with a single type of edge along the two-fold axes with Wyckoff symbol 6j. The
dual net of \texttt{qtz} is the \texttt{qzd} (quartz dual) net, with the same
space group. The \texttt{qzd} net has a single type of vertex, at Wyckoff
position 3a with symmetry $222$, on the six-fold $6_2$ screw axis in vertical
(c) direction. The \texttt{qzd} net has two edge types: a vertical edge along
the $6_2$ screw axis and a horizontal edge along a two-fold rotational symmetry
axis. The nodes of the \texttt{qzd} net are planar and four-coordinated, with
edges at $90^o$. The \texttt{qtz}-\texttt{qzd} structure has one free parameter
(the value of the crystallographic $c/a$), in addition to the lattice parameter
$a$ which only represents an affine scaling.

The \texttt{qtz}-\texttt{qzd} network pair was chosen, following advice by
Michael O'Keeffe, as it represents two dual nets that are not identical, that
have chiral screw symmetries and a free parameter. This was motivated by our
optical work in which we sought chiral sheet-like prototype geometries for
photonic crystals with a tunable chiral pitch.  

Following the philosophy set out by O'Keeffe, Hyde and others, the quartz (or
{\texttt{qtz}}) net and the {\texttt{qzd}} net, shown in Figure 1(a) and (b),
are an adequate choice for the generation of a well-defined dividing surface,
as they are duals (in the notion of Appendix II of \cite{Schoen_1970}) of one
another and of the same symmetry (called proper duals \cite{Blatov_2007}). 

\subsection{Evolution of tubular net to a mesh approximation of the minimal surface}

As the next stage of the construction, a discrete representation
(triangulation) of an interface between the {\texttt{qtz}} and the
{\texttt{qzd}} nets can be generated, which is neither smooth nor
area-minimizing. This can be the tubular surface, or a discretisation thereof,
around one of the two networks; or a culled Voronoi diagram of a set of points
on the two graphs (with all Voronoi faces removed that separate points on the
same graph \cite{de_Campo_2013,Himmelmann2025}).  The resulting triangulated
surface can be evolved towards a minimal surface, by using a conjugate gradient
scheme with a function $f[S]=\int_S (H-H_0)^2 dS$ with $H_0=0$ and $H$ the
point-wise mean curvature. Leaving mathematical details aside, minimisation of
this functional to $f[S]=0$ yields the same minimal surface as does
area-minimisation with a fixed (but here unknown) volume constraint
\cite{KGB_1997}. For a general surface, it is however not guaranteed that a
minimum with constant $H=0$ can be reached by a topology- and/or
symmetry-preserving evolution, however for the \texttt{qtz}-\texttt{qzd}
surface it works (We have used this method for $0.4\le c/a\le 2.9$). 

We have used Brakke's conjugate gradient solver {\tt Surface Evolver}
\cite{Brakke_1992} to determine a surface minimising $f[S]$, starting from a
tubular mesh around the {\texttt{qtz}} net. To a high degree of numerical
accuracy, the resulting surface has $f[S]=0$, with discrete point-wise mean
curvatures verified to vanish. By computer-visual inspection, all symmetries of
the original symmetry group are preserved.

The mesh representation we obtain is of sufficiently high quality to determine
the flat points (the points with Gauss curvature $K=0$) and their nature. In an
earlier pre-print version of this article \cite{preprint} (see in particular
section 4.2), we identified these flat points.

When one constructs the medial surface of the \texttt{qtz} domain and reduces
it to a one-dimensional net, with nodes and edges tracing lines of maximal
distance to the surface, it turns out that that 'skeletal graph' is not the
four-coordinated \texttt{qtz} net, but rather a net, known as \texttt{eta}
\cite{Rosi_2005}, with the same symmetry group with three-coordinated vertices.
See Fig.\ \ref{fig:qtz-qzd-surf-with-medax} (F).

\section{Toric Weierstrass parameterization}\label{sec:Weierstrass-construction-hao}

There is another way to approach the previously described surface. It relates
to Allan Schoen's model of the H'-T surface \cite{Schoen_1970} and gives rise
to our name for this surface as the \textit{gyrating H'-T} surface family.  

Schoen described a surface that he called H'-T \cite{Schoen_1970}. Its
translational unit consists of three ``catenoids'', one bounded by two regular
hexagons and the other two bounded by two equiangular triangles, with
catenoidal axis all in vertical direction (i.e., crystallographic $c$ axis),
see Fig.\ \ref{fig:Schoen-Hprime-T}.   Apart from periodic translations, and
H'-T surface also admits a six-fold rotation symmetry around the vertical axis
through the center of the hexagons, and two-fold rotational symmetries around
the horizontal axes that interchange the two hexagons.  

The skeletal graphs of the H'-T surface are, as per the reticular chemistry
structure resource, the graphs \texttt{bnn} and \texttt{hex} (which,
individually and as a pair, have symmetry group 191, $P6/mmm$), see Fig.\
\ref{fig:Schoen-Hprime-T} (C and D). The space group of the H'-T surface is,
therefore, space group 191 ($P6/mmm$). 

This means that the surface has six-fold rotational symmetry around a vertical
axes (the vertical edges of the \texttt{hex} net) and horizontal two-fold
rotational symmetry axes (the horizontal edges of the \texttt{hex} net). 

The construction of the \textit{gyrating H'-T} surface (separating the
\texttt{qtz}-\texttt{qzd} nets) is based on the following idea: The six-fold
rotational axis of the H'-T surface changes into a six-fold screw axis ($6_2$)
in the gyrating H'-T surface. The two-fold rotations, while preserved, no
longer intersect in the same point on the screw axis, but rather become a
staggered 'ladder' of two-fold axes shown in Fig.\ \ref{fig:qtz-qzd-nets}.

It was observed in~\cite[Lemma~4]{kgb1996} that, as one travels along the
associate family, the rotational symmetries listed above are preserved, except
for the rotational symmetry around the vertical axis, which is reduced to a
screw symmetry.

We define $\cH$ to be the set of embedded TPMSs of genus 4 that admit order-6
screw symmetries around vertical axes and order-2 rotational symmetries around
horizontal axes.  

Note that pure rotations are screw transforms with 0 translation.  So Schoen's
H'-T surfaces belong to $\cH$.  

Our key result is the statement that there exists another 1-parameter family in
$\cH$, different to Schoen's H'-T surface. We derive an explicit Weierstrass
formula for that surface and prove the following theorem:

\begin{theorem}\label{thm:main}
	Apart from Schoen's H'-T surfaces, there is another 1-parameter family of
	TPMS of genus 4 with the following properties:
	\begin{enumerate}
	  \item Each TPMS in the family admits a screw symmetry of order 6 around a
			vertical axis, and rotational symmetries of order 2 around horizontal
			axes.

		\item The family tends to Scherk saddle towers in one limit and to doubly
			periodic Scherk surfaces in the other limit. 
	\end{enumerate}
\end{theorem}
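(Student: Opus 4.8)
The plan is to construct an explicit Weierstrass parametrization on a torus, following the template that Chen used for the tG/rGL families, and then to verify that the period problem closes up and that the two degenerate limits have the claimed identifications.

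First I would set up the quotient by the order-6 screw symmetry $\sigma$. On a compact genus-$4$ surface $\Sigma$ the screw acts as an order-$6$ automorphism, and Riemann--Hurwitz gives $6 = 6(2g_T-2) + \sum_P(e_P-1)$, so the quotient $\Sigma/\langle\sigma\rangle$ is a torus $T=\C/\Lambda$ with total ramification $6$ over a handful of branch points---precisely the ``branched torus'' promised in the introduction. The Gauss map $g$ and the height differential $dh$ then descend to equivariant data on $T$: the screw forces $g\circ\sigma=\zeta\,g$ with $\zeta=e^{i\pi/3}$ and $\sigma^*dh=dh$, while each horizontal order-$2$ rotation imposes an involution exchanging $g$ with $1/g$ and reversing $dh$. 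These relations fix the divisors of $g$ and $dh$ in terms of the branch points, with the exponents at the flat points pinned down by the numerically determined data of the preceding sections. I would write the resulting Weierstrass pair explicitly using the Jacobi functions $\sn$ and $\Jsc$ of a modulus $k$, with $k$ (equivalently the conformal type of $T$) serving as the parameter of the family.

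Next I would reduce the period problem. Over a symmetry-adapted basis of $H_1(\Sigma;\Z)$, the periods of $(\phi_1,\phi_2,\phi_3)$ collapse under the screw and the horizontal rotations to a small number of real conditions; the horizontal lattice and the vertical translation of the screw axis should be forced automatically, leaving essentially a single balancing equation between a horizontal and a vertical period. I would then show that this residual equation has a solution for every $k$ in an interval by a continuity and monotonicity argument---an intermediate-value argument applied to the relevant ratio of elliptic integrals---which produces the one-parameter family at once and distinguishes it from Schoen's H'-T surfaces, the latter being the associate-family member with zero screw translation. This is the same strategy by which the existence of the deformation families of the Gyroid was established in \cite{Chen_2021}.

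Finally I would examine the two ends of the interval. Using the degenerations $\sn(\,\cdot\,,k)\to\tanh$ as $k\to1$ and $\sn(\,\cdot\,,k)\to\sin$ as $k\to0$, I would match one limit of the Weierstrass data with that of the doubly periodic Scherk surface (a horizontal period diverging as the catenoidal necks open into planar sheets) and the other with the Scherk saddle tower (the vertical period degenerating into a stack of towers), the latter agreeing with the gluing picture of \cite{chen2024}. The hard part will be the period problem itself: proving that the single balancing equation is solvable across the whole parameter range, and controlling the solution uniformly up to both degenerate limits, requires establishing the monotonicity of a period function built from elliptic integrals; one must also rule out spurious zeros of $dg$ so that the immersion remains regular and embedded, of genus $4$, throughout the family.
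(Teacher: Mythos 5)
Your outline follows the paper's strategy in broad strokes (toric Weierstrass data, reduction to a single balancing condition, limit identification, comparison with the gluing construction of \cite{chen2024}), but two of your steps have genuine gaps, and they are exactly the load-bearing ones. First, the setup: your Riemann--Hurwitz count alone does not show the quotient is a torus --- a genus-$0$ quotient is ruled out in the paper (Lemma~\ref{lemma:screw-symmetry}) by the observation that $dh$ is $S$-invariant and descends holomorphically, and the sphere carries no holomorphic differential; you omit this. More importantly, you propose to pin down the divisors of $G$ and $dh$ ``by the numerically determined data of the preceding sections,'' which is not a proof. The paper locates the branch points rigorously: the surface is represented as a double cover of the torus $(M/\Lambda)/S^3$, the order-$6$ screw descends to a translation by a $3$-division point, and Abel's theorem forces all branch points to $3$-division points (Lemma~\ref{lem:branchH}); the explicit $G^2$ in theta functions then follows with no numerical input.

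Second, and decisively, your treatment of the period problem would fail as stated. Parametrizing by a single Jacobi modulus $k$ suggests a one-real-parameter (rectangular-torus) family, but the relevant moduli space is the full two-real-dimensional $\tau$ half-plane: after fixing the associate angle $\theta=\theta_v(\tau)$, the closing condition \eqref{eq:period} is \emph{one real equation in two real variables}, and its solution set is a curve lying strictly inside $0<\re\tau<1$ --- the line $\re\tau=0$ is Schoen's H'-T family itself, so a rectangular ansatz would find nothing new. The monotonicity of a ratio of elliptic integrals that you flag as ``the hard part'' is precisely what you cannot assume and what the paper never proves; instead it compares $\theta_h$ and $\theta_v$ on the entire boundary of $\Omega_t$: on $\re\tau=0$ and $\re\tau=1$ via the H'-T surface and its conjugate, as $\im\tau\to\infty$ via a theta-function asymptotic giving $\theta_h\to(1/2-\re\tau/3)\pi$, and on the two circular arcs $|\tau-1/3|=1/3$ and $|\tau-2/3|=1/3$, where extra CLP-like symmetries of the Gauss map force $\theta_h<\theta_v$. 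Real-analyticity of $\theta_h,\theta_v$ then yields a continuous solution curve separating the arcs from the vertical lines and infinity (Appendix~\ref{suppmatsec:proof-of-existence}). Finally, ruling out spurious zeros of $dG$ only gives \emph{regularity} of the immersion, not embeddedness; in the paper embeddedness comes from the uniqueness and embeddedness of the glued surfaces of \cite{chen2024} near the saddle-tower limit, propagated along the whole curve via \cite[Proposition~5.6]{weyhaupt2006} and \cite{meeks1990} --- a mechanism entirely absent from your plan.
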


In~\cite{chen2024}, this family was constructed implicitly and proved to be
unique near the saddle tower limit.  This guarantees that the two constructions
we provide (that is, above in terms of the dual \texttt{qtz}-\texttt{qzd} net
and below in terms of the gyrated H'-T surface) give the same surface family.

\subsection{Weierstrass parametrization}\label{sec:weierstrass}

Let $M$ be a TPMS of genus $g$ invariant under the lattice translations
$\Lambda$ (i.e., $M/\Lambda$ is the translational unit cell).
Meeks~\cite{meeks1990} proved that the Gauss map $G$ represents $M/\Lambda$ as
a $(g-1)$-sheeted conformal branched cover of the sphere $\mathbb{S}^2$, and
Riemann-Hurwitz formula implies $4(g-1)$ branch points (counted with
multiplicity) corresponding to the zeros of the Gauss curvature (counted with
multiplicity).


We use the following form of Weierstrass parametrization that maps a point $p$ on the Riemann surface to a point in $\R^3$ and that traces back to
Osserman~\cite{osserman1964},
\begin{multline}\label{eq:weierstrass2}
	p \mapsto \re\int^p (\omega_1, \omega_2, \omega_3)\\
  = \re\int^p \Big(\frac{G^{-1}-G}{2}, \frac{i(G^{-1}+G)}{2},
	1\Big) dh\,.
\end{multline}
On the Riemann surface $\Sigma$ the functions $\omega_1, \omega_2, \omega_3$
must all be holomorphic.  In particular, the holomorphic differential $\omega_3
= dh$ is called the \textit{height differential}.  $G$ denotes (the stereographic
projection of) the Gauss map.  The triple $(\Sigma, G, dh)$ is called
Weierstrass data.

The purpose of this section is to determine the Weierstrass data for surfaces
in $\cH$ from their symmetries.  In particular, $\Sigma$ will be a branched
torus, whose branch points are determined in Lemma~\ref{lem:branchH}.  The
height differential is determined in Lemma~\ref{lem:dh}, and the Gauss map is
explicitly given by \eqref{eq:G}.

\begin{remark}\label{rem:sigma}
  Most textbooks only mention the Weierstrass parameterization where $\Sigma$
  is taken as a simply connected domain in the complex plane.  Now that we
  allow $\Sigma$ to be any Riemann surface, the parameterization becomes more
  flexible and powerful.  Different choices of $\Sigma$ should lead to the same
  result, but particular choices often bring convenience.  For instance, to
  parametrise a TPMS, one could use a branched cover of the sphere as $\Sigma$
  (as was done by Schoen \cite{Schoen_1970}, Meeks\cite{Meeks1975, meeks1990},
  	Fogden \& Hyde \cite{Fogden_i_1992, Fogden_ii_1992, Fogden_iii_1993}, and
		many others, and as we also did in an earlier arXiv version of this manuscript
	\cite{preprint}).  In this paper, as screw symmetries are concerned, we will
	instead use branched cover of tori as $\Sigma$, which allows us to employ
	elliptic functions.
\end{remark}

\subsection{Weierstrass data on tori}

Surfaces in $\cH$ all admit screw symmetries. The following Lemma justifies our
choice of branched tori for $\Sigma$. Here, $M$ is the infinite periodic
surface, $\Lambda$ are the lattice translations and therefore $M/\Lambda$ the
(primitive) translational unit cell. $S$ is the screw symmetry, and therefore
$(M/\Lambda)/S$ the surface patch from which the translational unit is obtained
by applying the screw symmetry $S$.

\begin{lemma}
  \label{lemma:screw-symmetry}
	If a TPMS $M$ of genus 4 admits a screw symmetry $S$, then $(M/\Lambda)/S$
	is of genus one.
\end{lemma}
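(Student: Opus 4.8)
The plan is to present $(M/\Lambda)/S$ as a quotient of the compact genus-$4$ surface $X:=M/\Lambda$ by a finite cyclic group and to compute its genus with Riemann--Hurwitz. Since the order-$6$ screw $S$ has a rotational part (of order $6$) that preserves the hexagonal lattice $\Lambda$, it normalises $\Lambda$ and descends to an automorphism $\bar S$ of $X$; and since a pure lattice translation has trivial rotational part, $\bar S^k=\mathrm{id}$ forces $6\mid k$, so $\bar S$ has order exactly $6$ and $\langle\bar S\rangle\cong\Z/6$. Writing $Y:=X/\langle\bar S\rangle$, the degree-$6$ quotient map gives
\[
  2g_X-2 = 6\,(2g_Y-2) + R,\qquad R:=\sum_{p\in X}(e_p-1),
\]
and with $g_X=4$ this reads $R = 18 - 12\,g_Y$. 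Hence proving $g_Y=1$ is equivalent to proving that the total ramification is exactly $R=6$.

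I would then identify the ramification. A point $[p]\in X$ is fixed by $\bar S^k$ exactly when $S^k(p)\in p+\Lambda$; comparing vertical components, and noting that the rotation preserves height while the $6_2$ screw contributes a vertical shift of $k/3$ of the period, this is possible only when $3\mid k$. So among the nontrivial powers only $\bar S^3$ has fixed points, and on $X$ it acts as a pure $180^\circ$ rotation about a vertical axis (its vertical shift is a full period, hence trivial on $X$). Because $\bar S^2$ has no fixed points, no point-stabiliser in $\langle\bar S\rangle$ can be $\Z/3$ or $\Z/6$; every stabiliser is therefore trivial or equal to $\langle\bar S^3\rangle\cong\Z/2$. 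Thus each fixed point of $\bar S^3$ has ramification index $2$ and contributes $1$ to $R$, so $R$ equals the number $N$ of such fixed points, and these fall into $\langle\bar S\rangle$-orbits of size $6/2=3$, making $N$ a multiple of $3$.

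The crux is to bound $N$. At a fixed point $p$ of the $180^\circ$ rotation $\rho$, the normal $\nu$ is an eigenvector of $d\rho_p$ with eigenvalue $\pm1$; the value $-1$ is impossible (it would make $G(p)$ antipodal to itself), so $\nu$ lies in the $+1$-eigenspace, which is the (vertical) axis. Hence every fixed point has vertical normal and maps under the Gauss map $G$ to a pole of $\mathbb S^2$. By Meeks' theorem $G\colon X\to\mathbb S^2$ has degree $g_X-1=3$, so the two poles have at most $3+3=6$ preimages and $N\le 6$. Combining $N\le6$, $N\equiv0\pmod 3$ and $R=N=18-12\,g_Y$ leaves only $g_Y=1$ (with $N=6$), which is the assertion. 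I expect the main obstacle to be exactly this bounding step: establishing that the fixed points must have vertical normals and are therefore controlled by the degree of the Gauss map, so that one never has to locate them on the surface by hand.
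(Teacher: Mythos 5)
Your central estimate is correct and rather elegant as far as it goes: a fixed point of the half-turn $\bar S^3$ must have vertical normal (the eigenvalue $-1$ would force $G(p)$ to be antipodal to itself), so Meeks' degree-$(g-1)=3$ Gauss map caps the number of such points at $6$, and Riemann--Hurwitz with $R=18-12\,g_Y$ then forces $g_Y=1$. But you have proved a special case, not the lemma. You silently assume that $S$ has order $6$ \emph{and} is specifically a $6_2$-type screw (vertical shift of $k/3$ of the period for $S^k$, over a hexagonal lattice with a vertical generator). The lemma is stated for an arbitrary screw symmetry, and the paper genuinely needs that generality: the torus on which the Weierstrass data live is $(M/\Lambda)/S^3$, the quotient by the \emph{order-two} screw $S^3$ (see Lemma~\ref{lem:branchH} and the lemma on $G^2$ preceding it). For an order-$2$ screw your scheme gives $2g_X-2=2(2g_Y-2)+R$, i.e.\ $R=10-4g_Y$, and since the fixed points are again among the at most $6$ vertical-normal points, the bound only excludes $g_Y=0$; the case $g_Y=2$ with $R=2$ (two fixed points) survives your argument untouched. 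Moreover, the paper explicitly counts pure rotations as screws with zero translation (this is how Schoen's H'-T itself lies in $\mathcal{H}$), and for a pure order-$6$ rotation your step ``only $\bar S^3$ has fixed points'' fails outright: with $\Z/3$- or $\Z/6$-stabilizers allowed, $R=18$ (hence $g_Y=0$) can be assembled from at most six vertical-normal points (e.g.\ three points of full stabilizer contributing $5$ each plus one $\Z/2$-orbit contributing $3$), so the counting no longer decides the genus.

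The paper's proof is organized to handle exactly these cases. It first rules out $g'=0$ for \emph{any} screw by a soft observation you never use: the height differential $dh$ is $S$-invariant, hence descends holomorphically to the quotient, and the sphere carries no holomorphic $1$-form. Riemann--Hurwitz with $n\ge 2$ then gives $g'<3$, and the remaining case $g'=2$ (possible pairs $(n,B)=(3,0)$ and $(2,2)$) is excluded using finer structure of the Gauss map --- $M/\Lambda$ is a $3$-sheeted branched cover of $\mathbb{S}^2$ with $12$ simple branch points, none fixed by the screw --- by applying Riemann--Hurwitz to the descended maps $G^3$ resp.\ $G^2$ on the quotient. To repair your proposal you would need to add the height-differential argument (or an equivalent) against $g_Y=0$ in the rotation case, and some version of this branch-point bookkeeping to kill $g_Y=2$ in the order-$2$ case; the vertical-normal count alone is not enough for the statement as used.
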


The simple proof (given in section \ref{sec:proof-of-lemma-screw}) uses the
Riemann--Hurwitz formula
\[
	g = n(g' - 1) + 1 + B/2
\]
that relates the genus $g$ of the translational unit cell $M/\Lambda$ (in our
case $g=4$) to the genus $g'$ of the translational unit cell modulo $S$, i.e.\
$(M/\Lambda)/S$. $n$ is the degree of the quotient map, and $B$ is the total
branching number. The proof simply considers each of the small number of
discrete possibilities. 

The height differential $dh$, being a holomorphic 1-form on the torus, must be
of the form $r e^{-i\theta} dz$.  Varying the modulus $r$ only results in a
scaling.  Varying the argument $\theta$ gives the associate family, so we call
$\theta$ the \textit{associate angle}:

\begin{lemma}\label{lem:dh}
	If a TPMS $M/\Lambda$ with a screw symmetry $S$ is represented on the
	branched cover of the torus $(M/\Lambda)/S$, then up to the scaling, the
	height differential $dh$ must be the lift of $e^{-i\theta}dz$ (note the
	sign!).
\end{lemma}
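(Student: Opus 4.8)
The plan is to exploit two facts: that the screw symmetry acts on the height differential in a particularly simple way, and that a torus carries an essentially unique holomorphic $1$-form. Write $\sigma$ for the conformal automorphism of $M/\Lambda$ induced by $S$, generating a cyclic group of order $6$; by Lemma~\ref{lemma:screw-symmetry} the quotient $T=(M/\Lambda)/S$ is a torus, which I identify with $\mathbb{C}/\Gamma$ with coordinate $z$, writing $\pi\colon M/\Lambda\to T$ for the branched covering.

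First I would pin down how $\sigma$ acts on $dh$. The rotational (linear) part of the screw $S$ is a rotation about the vertical axis, so it fixes the $x_3$-direction; since $S$ acts by $F\circ\sigma = S\circ F$, pulling back the coordinate differentials gives $\sigma^*dx_3 = dx_3$. Because $dx_3=\re(dh)$ and $dh$ is holomorphic on $M/\Lambda$, the difference $\sigma^*dh - dh$ is a holomorphic $1$-form with vanishing real part, hence identically zero. Thus $\sigma^*dh=dh$, i.e.\ $dh$ is $S$-invariant. (The same computation applied to $dx_1+i\,dx_2$ recovers $\sigma^*G=e^{i\alpha}G$ for the rotation angle $\alpha$, which will be reused when the Gauss map is normalised in \eqref{eq:G}.)

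Next I would argue that the invariant form $dh$ is the lift of a \emph{holomorphic} form on $T$. Invariance guarantees that $dh$ descends to a meromorphic $1$-form $\eta$ on $T$ with $\pi^*\eta=dh$; the only issue is possible poles on the branch locus. The branch points are precisely the fixed points of $S^3$ (a pure $180^\circ$ rotation on $M/\Lambda$, the only power of $S$ with fixed points), each of ramification index $2$. In a local coordinate $u$ with $S^3\colon u\mapsto -u$ and quotient coordinate $v=u^2$, invariance forces the local expression of $dh$ to be odd in $u$, so $dh=u\,g(u^2)\,du=\tfrac12\,g(v)\,dv$ for a holomorphic $g$; hence $\eta$ is holomorphic at $v=0$ and so $\eta\in H^0(T,\Omega^1)$. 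As a sanity check, $\pi^*\eta$ then has a simple zero at each of the ramification points, and their number matches $\deg(dh)=2g-2=6$ on the genus-$4$ surface, confirming that $dh$ has no further zeros.

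Finally I would invoke the dimension count $\dim H^0(T,\Omega^1)=1$: every holomorphic $1$-form on the torus is a constant multiple of $dz$, so $\eta=c\,dz$ with $c\in\mathbb{C}^{\ast}$. Writing $c=r\,e^{-i\theta}$ shows that, up to the real scaling $r$, $dh=\pi^*(e^{-i\theta}dz)$ is the lift of $e^{-i\theta}dz$; the modulus $r$ is an overall homothety and the argument $\theta$ is the associate angle. The sign in the exponent is not forced by the descent argument—it is a convention, chosen to be compatible with the orientation used for the associate family and with the normalisation of the Gauss map in \eqref{eq:G}, and I would simply record this choice. The main obstacle is the holomorphic descent of the third paragraph: one must correctly identify the branch locus as the fixed-point set of $S^3$ and verify, via the local parity computation, that no pole is introduced there.
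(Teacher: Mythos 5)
Your proof is correct and takes essentially the same route as the paper, whose (very terse) argument is exactly this: $dh$ is $S$-invariant, hence descends holomorphically to the quotient torus (this is stated in the proof of Lemma~\ref{lemma:screw-symmetry}), where every holomorphic $1$-form is a constant multiple $c\,dz$, and writing $c=re^{-i\theta}$ gives the claim. The details you supply---deriving $\sigma^*dh=dh$ from $\re dh = dx_3$ together with holomorphicity, and the local parity computation showing the descended form has no poles at the branch points---are correct verifications of steps the paper leaves implicit, and your remark that the sign in $e^{-i\theta}$ is a convention tied to the later normalizations is also accurate.
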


\subsection{Locating branch points}

If the order of $S$ is prime, the following formula
from~\cite{farkas1992} allows us to calculate the number of fixed points: 
\[
	|\operatorname{fix}(S)| = 2 + \frac{2g - 2g'\operatorname{order}(S)}{\operatorname{order}(S) - 1}.
\]
In particular, a screw symmetry of order 2 fixes exactly six points.  They
correspond to zeroes and poles of $G^2$.  The following lemma follows from the
same argument as in~\cite[Lemmas~3.9, 3.13]{weyhaupt2006}.

\begin{lemma}
	If a TPMS of genus 4 $M/\Lambda$ admits a screw symmetry $S$ of order 2, then
	$G^2$ descends to an elliptic function on the torus $(M/\Lambda)/S$ with
	three simple zeros and three simple poles.
\end{lemma}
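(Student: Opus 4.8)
The plan is to combine the fixed-point count already quoted from Farkas–Kra with a careful analysis of how the Gauss map $G$ transforms under the order-2 screw symmetry $S$. First I would recall that on a TPMS, the differential $dh = \omega_3$ is preserved (up to sign) by any symmetry, while $G$ transforms according to whether the symmetry is orientation-preserving in space. For an order-2 rotation or screw about a \emph{horizontal} axis, the Gauss map $G$ is sent to $-1/\bar{G}$ or a similar Möbius transform, but since we are told $S$ is an order-2 \emph{screw} symmetry here, I expect $G \circ S = -G$ (the antipodal-type relation appropriate to a vertical $180^\circ$ screw), so that $G^2 \circ S = G^2$. This invariance is exactly what forces $G^2$ to descend to a well-defined meromorphic function on the quotient $(M/\Lambda)/S$, which by Lemma~\ref{lemma:screw-symmetry} is a torus, hence $G^2$ becomes an elliptic function there.

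Next I would pin down the divisor of $G^2$ on the torus. The six fixed points of $S$ guaranteed by the formula
\[
  |\operatorname{fix}(S)| = 2 + \frac{2g - 2g'\operatorname{order}(S)}{\operatorname{order}(S)-1} = 2 + \frac{8-4}{1} = 6
\]
are the points where the branched double cover $M/\Lambda \to (M/\Lambda)/S$ ramifies, and they are precisely the points where $G^2$ has a zero or pole: at a fixed point, $G = -G$ forces either $G=0$ or $G=\infty$. So the six fixed points split into the zeros and poles of $G^2$ downstairs. The key remaining step is to argue that these are \emph{simple} zeros and poles, three of each, rather than higher-order ones or an unbalanced count. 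The balance of three zeros against three poles I would get from the fact that an elliptic function has equally many zeros and poles, combined with a symmetry of the surface (the horizontal order-2 rotations in $\cH$) that interchanges the north and south poles of the Gauss sphere, hence swaps the zeros and poles of $G^2$ and forces their numbers to be equal, namely three each.

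For simplicity, I would examine the local behavior of $G$ at a fixed point of $S$. Because $S$ acts as a $180^\circ$ screw, its differential on the tangent space of the branched cover is multiplication by $-1$ in a suitable local coordinate $\zeta$ (with the quotient coordinate being $\zeta^2$), and $G$ vanishes (or blows up) to first order in $\zeta$ by the branch-point structure inherited from Meeks' representation; thus $G^2 \sim \zeta^2$ descends to a simple zero in the coordinate $\zeta^2$ on the torus. Counting all six fixed points this way yields exactly three simple zeros and three simple poles, which also matches the degree expected from the Gauss-map branching data via Riemann–Hurwitz on the torus.

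The main obstacle I anticipate is not the counting but justifying rigorously that the order of vanishing of $G$ at each fixed point is exactly one (so that $G^2$ gives \emph{simple} rather than multiple zeros/poles downstairs). This requires knowing that the branch points of the Gauss map coincide with the fixed points of $S$ with the correct multiplicity, which is where I would lean on the cited analysis of Weyhaupt~\cite[Lemmas~3.9, 3.13]{weyhaupt2006}: the genus-4 total branching number of $4(g-1)=12$ must distribute over the fixed-point set in a way compatible with the screw action, and ruling out coincident or higher-order branching is the delicate part of the argument.
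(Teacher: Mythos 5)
Your descent step ($G\circ S=-G$ for a vertical order-$2$ screw, hence $G^2$ invariant and elliptic on the genus-one quotient) and your fixed-point count of six are correct and match the intended argument. But the core of the lemma --- that there are exactly \emph{three} zeros and \emph{three} poles and that they are \emph{simple} --- is not established, and you say so yourself. Worse, the route you sketch for simplicity is backwards: the six fixed points of $S$ are the ramification points of the quotient covering $M/\Lambda \to (M/\Lambda)/S$, \emph{not} branch points of the Gauss map. Simplicity of the zeros of the descended function is equivalent to $G$ having a \emph{simple} zero upstairs, i.e.\ $dG\neq 0$ there, so the fixed points are precisely points where $G$ is unbranched (the surface is not flat there); the $4(g-1)=12$ flat points of Meeks' representation sit elsewhere, and no ``distribution of the branching number $12$ over the fixed-point set'' can deliver the conclusion. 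Your three-versus-three count also leans on the horizontal two-fold rotations of $\cH$, which are not a hypothesis of this lemma (it assumes only the order-$2$ screw), and even granting them, a swap symmetry gives equality of the two counts but not the value three nor simplicity.

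The missing ingredient is the degree statement already quoted in the paper from Meeks~\cite{meeks1990}: $G$ represents $M/\Lambda$ as a $(g-1)=3$-sheeted cover of $\mathbb{S}^2$, so $G$ has total zero multiplicity $3$ and total pole multiplicity $3$, $G^2$ has degree $6$, and the descended function on the torus is elliptic of degree $6/2=3$. Since $G\circ S=-G$ forces $G=0$ or $G=\infty$ at every fixed point, each of the six fixed points contributes at least one to the zero or pole divisor of the degree-$3$ elliptic function; as the zeros carry total multiplicity $3$ and the poles total multiplicity $3$, with $3+3=6$ distinct fixed points to accommodate, every fixed point must carry multiplicity exactly one, there must be three of each kind, and no zeros or poles can occur away from the fixed points. (One can also note that $G$ is odd in a local coordinate $\zeta$ with $S\colon\zeta\mapsto-\zeta$, so its vanishing order at a fixed point is odd, but with the degree count this refinement is not even needed.) This counting is exactly the argument of~\cite[Lemmas~3.9, 3.13]{weyhaupt2006} to which the paper defers; without the degree-$3$ input your proposal cannot close the gap it flags.
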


We now try to locate the branch points of the covering map for surfaces in
$\cH$.  Since the ramification points on $M/\Lambda$ (i.e.\ points where the
covering map fails to be a local homeomorphism) are all poles and zeros of the
Gauss map, our main tool is naturally Abel's Theorem, which states that the
difference between the sum of poles and the sum of zeros (counting
multiplicity) is a lattice point.

A surface in $\cH$ admits a screw symmetry $S$ of order 6, $S^3$ is then a
screw symmetry of order 2. 

\begin{lemma}[Compare~{\cite[Lemma 3.10]{weyhaupt2006}}]\label{lem:branchH}
	Let $M$ be a TPMS of genus 4 admitting a screw symmetry $S$ of order 6, hence
	parametrized on a branched double cover of the torus $(M/\Lambda)/S^3$.  If
	one branch point is placed at $0$, then the other branch points must be
	placed at 3-division points.
\end{lemma}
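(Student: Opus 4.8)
The plan is to locate the branch points of the double cover $(M/\Lambda) \to (M/\Lambda)/S^3$ using the compatibility of the order-6 screw symmetry $S$ with the order-2 screw symmetry $S^3$, together with Abel's theorem applied to the Gauss map. Since $M$ has genus 4 and $S^3$ is an order-2 screw symmetry, the previous lemma tells us $(M/\Lambda)/S^3$ is a torus $T$, and Meeks' theory says the covering $M/\Lambda \to T$ is a branched double cover. By Riemann-Hurwitz for the double cover, $4 = 2(1-1) + 1 + B/2$, so the total branching number is $B = 6$: there are exactly six branch points on the torus $T$ (each a simple ramification point of the degree-2 map). These six points are precisely the six fixed points of $S^3$ computed by the Farkas-Kra formula cited just above, and they are where $G^2$ has its zeros and poles.

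First I would exploit the residual symmetry. The order-6 screw $S$ induces, on the quotient torus $T = (M/\Lambda)/S^3$, an order-3 automorphism $\sigma$ (since $S^3$ is quotiented out, $S$ descends to an element of order $6/\gcd = 3$). This $\sigma$ must permute the six branch points among themselves. An order-3 automorphism of a torus fixing the marked branch-point structure forces $T$ to be the hexagonal (equianharmonic) torus $\C/(\Z + \Z\omega)$ with $\omega = e^{\pi i/3}$, and $\sigma$ acts as multiplication by a primitive cube root of unity (up to translation). The key consequence is that the six branch points are organized into two orbits of size 3 under $\sigma$, and each orbit consists of points differing by the fixed points of $\sigma$.

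Next I would place one branch point at $0$, as the lemma permits (this is a normalization by translation). I then need to show the remaining branch points sit at 3-division points, i.e.\ points of the form $(a + b\omega)/3$ with $a, b \in \Z$. The argument combines two ingredients. From Abel's theorem: since the six branch points are the three zeros and three poles of $G^2$ (by the preceding lemma), the difference (sum of poles minus sum of zeros) is a lattice point. From the order-3 symmetry: the three zeros form a $\sigma$-orbit and likewise the three poles, so each triple is of the form $\{p, \sigma p, \sigma^2 p\}$. Summing a full orbit of multiplication-by-$\zeta_3$ (centered appropriately) yields $3$ times a fixed point of $\sigma$, and the fixed points of an order-3 automorphism on the hexagonal torus are exactly the 3-division points. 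Pinning the orbit containing $0$ and invoking Abel's relation then forces the other orbit's base point, hence all branch points, to lie at 3-division points.

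The main obstacle I expect is the bookkeeping of \emph{where} the order-3 symmetry $\sigma$ places its fixed points relative to the normalization $0$, and ensuring the two $\sigma$-orbits (zeros versus poles) are correctly distinguished and do not secretly coincide or collapse. Concretely, one must verify that $\sigma$ genuinely acts with order 3 on $T$ (and not trivially), identify its three fixed points as the nonzero 3-division points, and check that placing one branch point at the origin is compatible with $0$ being such a fixed point. Resolving this cleanly is essentially the content of the analogous result in Weyhaupt's thesis \cite[Lemma 3.10]{weyhaupt2006}, and I would follow that template closely, adapting the lattice and the orbit count to the genus-4, order-6 setting here. The Abel's-theorem step is then a short verification rather than the crux.
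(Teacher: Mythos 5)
There is a genuine error in your proposal: you claim that the order-6 screw $S$ descends to the quotient torus as an automorphism acting as multiplication by a primitive cube root of unity, which would force the torus to be the hexagonal (equianharmonic) one. This is wrong, and it contradicts the rest of the paper: Theorem~\ref{thm:main} produces a \emph{one-parameter} family, with the modulus $\tau$ sweeping a curve in the strip $0<\re\tau<1$ (Figure~\ref{fig:tau}); on a rigid equianharmonic torus there would be nothing to vary. The point you are missing -- and the heart of the paper's short proof -- is that $S$ preserves the height differential $dh$ (a vertical screw motion changes heights only by an additive constant), and $dh$ descends to a nonzero holomorphic $1$-form proportional to $dz$ on the quotient torus. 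An automorphism $z\mapsto az+b$ of a torus pulls $dz$ back to $a\,dz$, so preservation of $dh$ forces $a=1$: the descended map is a \emph{translation} $z\mapsto z+t$, which exists on every torus. Its order is exactly $3$ (were it trivial, $S$ or $S^2$ would act as the identity on $M/\Lambda$, contradicting order $6$), so $3t$ is a lattice point, i.e.\ $t$ is a $3$-division point. The misidentification also breaks your subsequent bookkeeping: a translation is fixed-point free, so your plan to identify branch points via ``the fixed points of $\sigma$'' has no content here, and the hexagonal structure you invoke to locate them is unavailable.

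Your other two ingredients are sound and, once the translation is identified, they complete the proof exactly as the paper does: the zeros of $G^2$ form the orbit $\{0, t, -t\}$ and the poles form $\{p, p+t, p-t\}$; the sum of the zeros is $0$ and the sum of the poles is $3p$, so Abel's theorem forces $3p$ to be a lattice point, i.e.\ $p$ (and hence $p\pm t$) is a $3$-division point. So the Abel step and the orbit decomposition in your write-up are correct; the gap is the rotational (rather than translational) model of the descended symmetry, which is not a minor bookkeeping issue but would collapse the moduli of the entire family.
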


Here, a point $p$ in the torus is a \textit{$3$-division point} if $3p = 0$.
Or, equivalently, if we see the torus as the quotient of the complex plane over
a lattice, then a point is a $3$-division point if $3p$ is a lattice point. See
Appendix \ref{proof-of-lem:branchH} for the proof of this lemma.

Let the quotient torus be spanned by $1$ and $\tau \in \C$.  Without loss of
generality, we may then assume that the zeros of $G^2$ are at $0$, $1/3$, and
$2/3$, while the poles of $G^2$ are at $\tau/3$, $\tau/3+1/3$, and
$\tau/3+2/3$.  See Figure~\ref{fig:branchedTorus}. 

\begin{figure*}
  \includegraphics[width=\textwidth]{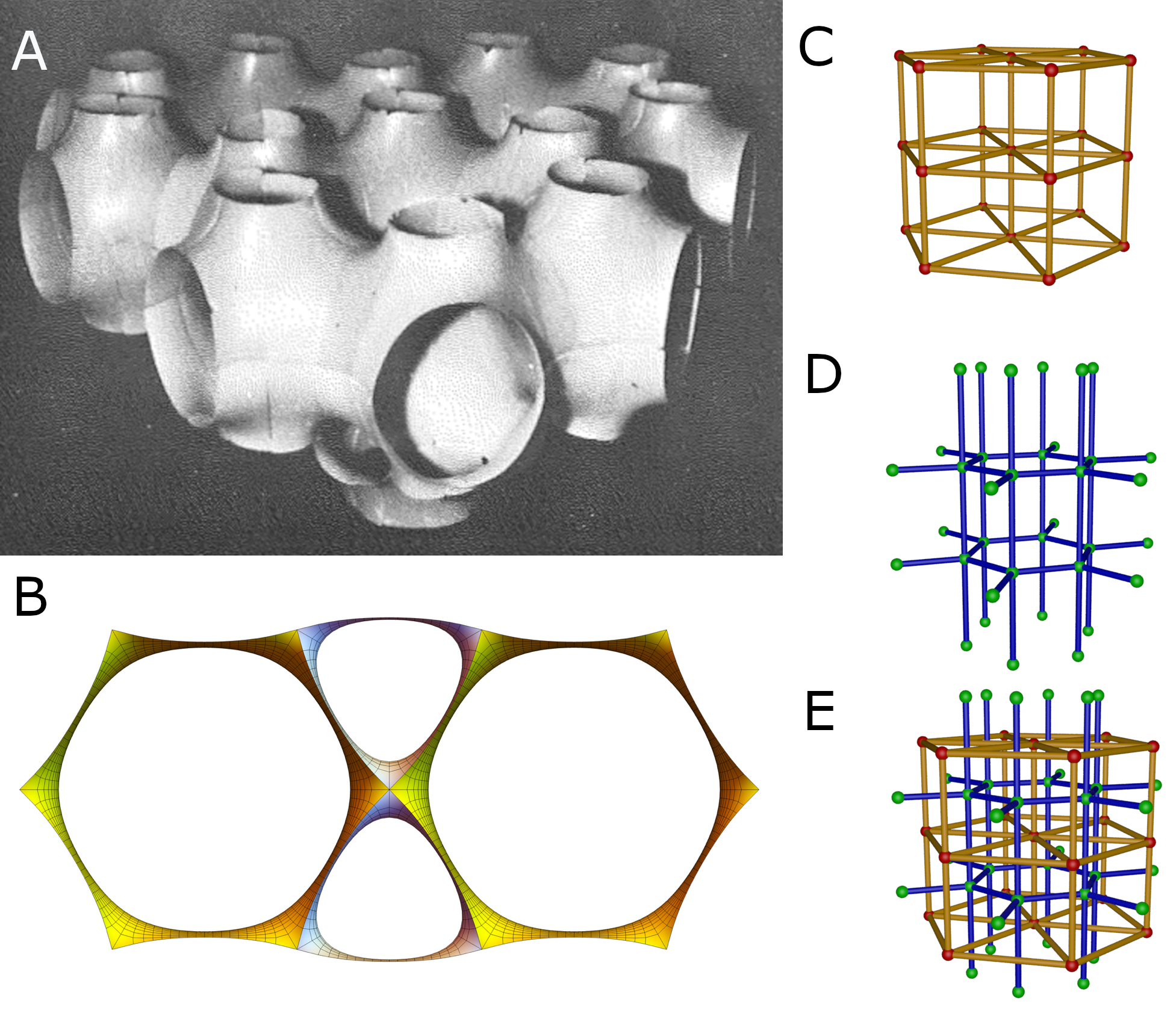}
  \caption{ \label{fig:Schoen-Hprime-T} Schoen's H'-T surface: (A) Photograph
    of a plasticine model created by Alan Schoen. (B) The H'-T surface can be
    constructed by copy-translations from a hexagonal catenoidal patch and
    two adjacent triangular catenoidal necks, all aligned in the vertical
    axis (that is, the crystallographic $c$ axis). (C) The hexagonal lattice
    (\texttt{hex}) representing the skeletal graphs on one side of the
    surface (D) The \texttt{bnn} net representing the other skeletal graph.
    (E) The dual pair of \texttt{hex} and \texttt{bnn} nets.  (Image (A)
    reproduced from Figure 13 of \cite{Schoen_1970}).
  }
\end{figure*}

\begin{figure*}
	\includegraphics[width=0.6 \textwidth]{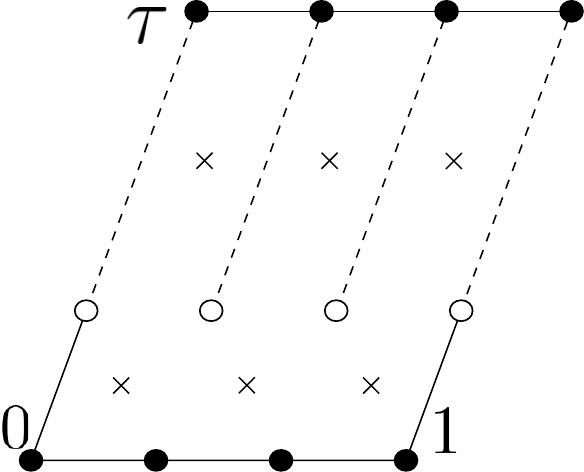}
	\caption{
    \label{fig:branchedTorus} Branched torus on which our surface is
    parametrized.  Dashed segments are branched cuts.  Solid circles are the
    zeros of $G^2$, empty circles are the poles, and $\times$ represent
    symmetry centers, corresponding to fixed points on the surface under
    order-$2$ rotations with horizontal axis.
	} 
\end{figure*}

Then $G^2$ has the explicit form
\[
	G^2(z) = \rho \frac{\theta(z;\tau) \theta(z-\frac{1}{3};\tau) \theta(z+\frac{1}{3};\tau)}{\theta(z+\frac{2\tau}{3};\tau) \theta(z - \frac{\tau}{3} - \frac{1}{3};\tau) \theta(z - \frac{\tau}{3} + \frac{1}{3};\tau)},
\]
where $\theta(z; \tau)$ is the Jacobi Theta function.

The factor $\rho$ is called the L\'opez--Ros factor \cite{lopez1991}.  Varying
its argument only results in a rotation of the surface in the space, hence only
the norm $|\rho|$ concerns us.  We choose $\rho$ so that $G^2 = 1$ at the
rotation center $\tau/6$. 

Equivalently, by \cite[(9.6.6)]{lawden1989}, we can write
\begin{equation}\label{eq:G}
	G^2(z) = \rho' e^{2 i \pi z} \frac{\theta(3z;3\tau)}{\theta(3z-\tau;3\tau)},
\end{equation}
where $\rho' = -e^{i\pi\tau/3}$ is again chosen so that $G^2(\tau/6) = 1$.

\subsection{Twisted catenoids}\label{sec:period}

We choose three branch cuts along the straight segments from $\tau/3 + k/3$ to
$\tau + k/3$, $k = 0, 1, 2$.  Assume $dh = e^{-i\pi/2} = -idz$ for the moment.
We now study the image of the Weierstrass parametrization

From the quasi-periodicity of $\theta$ function
\[
	\theta(z + (m+n\tau);\tau) = (-1)^{m+n} e^{-in^2\pi\tau} e^{-2in\pi z} \theta(z;\tau),
\]
we note that
\[
	G^2(z + 1/3) = -e^{2 i \pi / 3} G^2(z).
\]

By the same argument as in~\cite{chen2021}, we know the following
\begin{enumerate}
	\item For each branch of the branched torus, the part $0 < \im z < \im
		\tau/3$ is mapped by the Weierstrass representation to a twisted catenoids
		bounded by curved triangles.  These triangles are congruent, right-angled,
		and lie in horizontal planes at height $0$ and $\im\tau/3$, respectively.
		Moreover, they share a rotational symmetry of order $3$ around vertical
		axis.

	\item The part $\im\tau/3 < \im z < \im\tau$ of the branched torus is mapped
		to a twisted catenoid bounded by curved hexagons.  These hexagons are
		congruent, right-angled, and lie in horizontal planes at height $\im\tau/3$
		and $\im\tau$, respectively.  Moreover, they share a rotational symmetry of
		order $6$ around a vertical axis.

 	\item Because of our choice of the L\'opez--Ros factor $\rho$, all these
		twisted catenoids admit rotational symmetries of order $2$ around
		horizontal axes that exchange its boundaries.
\end{enumerate}

See Figure~\ref{fig:twistedCatenoid} for a typical example.

\begin{figure*}
	\includegraphics[width=0.6 \textwidth]{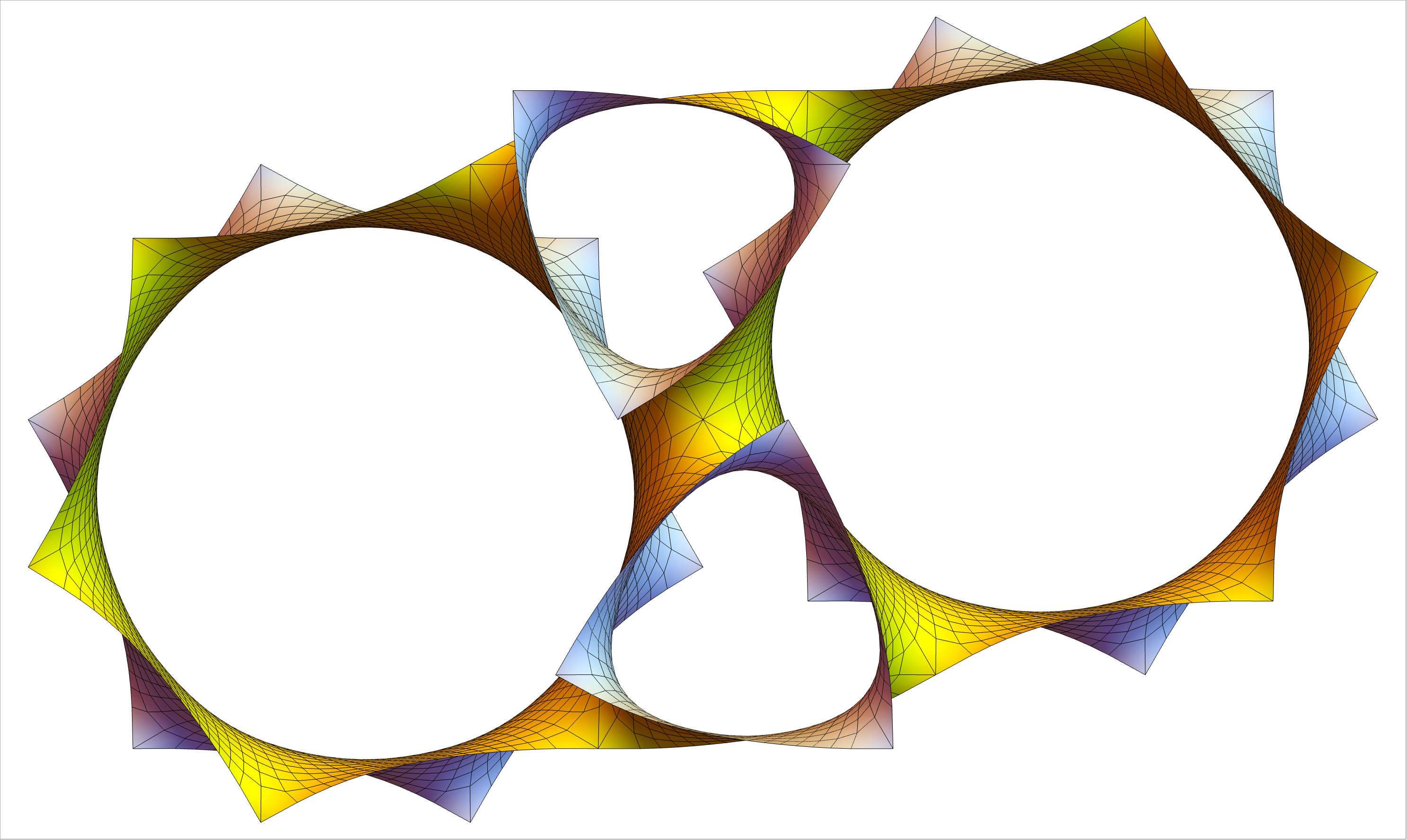}
	\caption{ \label{fig:twistedCatenoid} Image of Weierstrass representation
		assuming bonnet angle $\theta = \pi/2$, seen from below, showing two
		twisted triangular catenoids (corresponding to the lower third of the
		branched torus), and two twisted hexagonal catenoids (one of these
			corresponds to the upper two thirds of the branched torus, the other one
		is a lattice translation included for clarity).
	}
\end{figure*}

\subsection{Period problem}

As one travels from the twisted catenoids along the associate family, the
twisted catenoid opens up into a ribbon bounded by triangular or hexagonal
helices.  For adjacent ribbons to ``fit exactly into each
other''~\cite{kgb1996}, we need that the poles and zeros of the Gauss map are
(i) aligned along vertical lines over the vertices of a hexagon-triangle
tiling, and (ii) alternatingly arranged and the vertical width of the hexagonal
ribbon doubles that of the triangular ribbon.

The ratio of the vertical width is actually a consequence of the positions of
the branch points.  The requirements listed above form a sufficient condition
for the immersion.

To be more precise, we define the pitch of a helix to be the increase of height
after the helix makes a full turn.  We need that the pitch of each triangular
helix is three times the minimum vertical distance between the poles and the
zeros.  As a consequence, the pitch of each hexagonal helix, which doubles the
pitch of triangular helices, must be six times the minimum vertical distance
between the poles and the zeros.

This means that the integral of the height differential $dh$
from $0$ to $1$ triples the integral from $0$ to $\tau/3$.  Or
equivalently, the integral from $0$ to $1/3$ equals the integral from $0$ to
$\tau/3$.  This can be easily achieved by adjusting the associate angle
$\theta$ to (compare~\cite[Definition~4.2]{weyhaupt2008})
\[
	\theta = \theta_v(\tau) = \arg(\tau-1) - \pi/2.
\]

\medskip

We now calculate the associate angle in another way, using the fact that the
images of $0$ and $\tau$ are vertically aligned, i.e.\ have the same
horizontal coordinates.

First note that, because $\tau/6$ is a center of symmetry, we have
\[
	\int_0^{\tau/3} dz/G = \int_0^{\tau/3} dz \cdot G =: \psi(\tau)
\]
We may place the image of $0$ at the origin.  First look at the surface with
$\theta=0$, hence $dh = dz$.  Then the horizontal coordinates of the image of
$\tau/3$ are
\[
	\re \int_0^{\tau/3} \Big(\frac{1}{2}\big(\frac{1}{G}-G\big), \frac{i}{2}\big(\frac{1}{G}+G\big)\Big)dz = (0, -\im \psi(\tau)).
\]
Then we look at the surface with $\theta=\pi/2$, hence $dh = e^{-i\pi/2} dz =
-i dz$ (the conjugate surface).  Then the coordinates are
\[
	\re \int_0^{\tau/3} \Big(-\frac{i}{2}\big(\frac{1}{G}-G\big), \frac{1}{2}\big(\frac{1}{G}+G\big)\Big)dz = (0, \re \psi(\tau)).
\]
So for the surface with associate angle $\theta$, the first coordinate is
always $0$, while the second coordinate
\[
	- \cos\theta \im\psi(\tau) + \sin\theta \re\psi(\tau)
\]
vanishes when
\[
	\theta = \theta_h(\tau) := \arg\psi(\tau).
\]

\medskip

We have shown that

\begin{lemma}
	The Weierstrass data given by Lemmas~\ref{lem:dh} and~\eqref{eq:G} define an
	immersion if and only if
	\[
		\theta_h(\tau)=\theta_v(\tau),
	\]
	or more explicitly,
	\begin{equation}\label{eq:period}
		\arg\psi(\tau) = \arg(\tau-1)-\pi/2
	\end{equation}
\end{lemma}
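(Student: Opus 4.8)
The plan is to argue that, with the Weierstrass data $(\Sigma,G,dh)$ fixed as in Lemmas~\ref{lem:dh} and~\eqref{eq:G}, the map \eqref{eq:weierstrass2} is automatically a conformal minimal map on the universal cover of $\Sigma$, so that the only question is single-valuedness modulo the translation lattice. Thus the data defines a (periodic) immersion if and only if the real period homomorphism $\gamma\mapsto\re\oint_\gamma\Phi$, with $\Phi=(\omega_1,\omega_2,\omega_3)$, sends $H_1(M/\Lambda;\Z)$ into $\Lambda$, vanishing on the cycles that do not cross into a neighbouring translate. That $\Phi$ is a triple of genuine holomorphic $1$-forms, with no common zeros so that the immersion is unramified, is guaranteed by the placement of the branch points and of the zeros and poles of $G^2$; hence the whole issue is the period problem.

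First I would use the symmetries to collapse this period problem to the two scalar conditions isolated in the preceding discussion. The order-$6$ screw symmetry $S$ and the horizontal order-$2$ rotations act on $H_1(M/\Lambda;\Z)$, and the quasi-periodicity $G^2(z+1/3)=-e^{2i\pi/3}G^2(z)$ identifies the three branch cuts, and the cycles threading them, up to the action of $S$. Moreover, the L\'opez--Ros factor $\rho$ was chosen (item~(3) of Section~\ref{sec:period}) precisely so that each twisted catenoid carries the order-$2$ rotations exchanging its two boundary curves; this forces the horizontal periods around the catenoid ``waists'' to vanish and makes each catenoid individually balanced. What remains is the single question of how one twisted-triangular and one twisted-hexagonal piece stack, which produces exactly one vertical (height) condition and one horizontal (in-plane) condition.

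With the reduction in place, the two conditions are read off directly, as in the computation above. Writing $dh=e^{-i\theta}\,dz$, the vertical condition that adjacent triangular and hexagonal ribbons have commensurate pitch reads $\re\int_0^{1/3}dh=\re\int_0^{\tau/3}dh$, i.e.\ $\re\bigl(e^{-i\theta}(\tau-1)\bigr)=0$, equivalently $\theta=\theta_v(\tau)=\arg(\tau-1)-\pi/2$. The horizontal condition that the images of $0$ and $\tau$ share the same in-plane coordinates reduces, once the symmetry centre at $\tau/6$ forces $\int_0^{\tau/3}dz/G=\int_0^{\tau/3}G\,dz=\psi(\tau)$, to the vanishing of $-\cos\theta\,\im\psi(\tau)+\sin\theta\,\re\psi(\tau)$, i.e.\ to $\theta=\theta_h(\tau)=\arg\psi(\tau)$. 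Since the associate angle $\theta$ is a single real parameter shared by both requirements, an immersion exists if and only if both can be met at once, that is, if and only if $\theta_h(\tau)=\theta_v(\tau)$, which is~\eqref{eq:period}.

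The step I expect to be the real obstacle is the middle one: certifying that the vertical and horizontal conditions are the \emph{only} obstructions. This amounts to exhibiting a symmetry-adapted basis of $H_1(M/\Lambda;\Z)$ on the branched double cover and checking that the screw action together with the horizontal rotations identify every basis cycle with one of the two ``fitting'' cycles treated above, so that all remaining real periods either vanish or are automatic lattice generators once \eqref{eq:period} holds. The balancing supplied by the L\'opez--Ros normalization is exactly what makes this bookkeeping close, and it is here that the arguments of~\cite{chen2021} and~\cite[Lemmas~3.9--3.13]{weyhaupt2006}, invoked earlier, do the heavy lifting.
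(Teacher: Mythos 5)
Your proposal is correct and takes essentially the same route as the paper, whose proof of this lemma is precisely the preceding discussion: the twisted-catenoid decomposition (via the argument cited from \cite{chen2021}) and the symmetry centre at $\tau/6$ reduce the period problem to the vertical pitch condition $\re\int_0^{1/3}dh=\re\int_0^{\tau/3}dh$, giving $\theta=\theta_v(\tau)=\arg(\tau-1)-\pi/2$, and the horizontal alignment of the images of $0$ and $\tau$, giving $\theta=\theta_h(\tau)=\arg\psi(\tau)$, so the data close up iff $\theta_h(\tau)=\theta_v(\tau)$. The step you flag as the real obstacle---verifying that these two scalar conditions exhaust the period obstructions---is handled in the paper at the same level of rigor as in your sketch, namely by appeal to the symmetry reduction and the fitting criterion of \cite{kgb1996} rather than by an explicit homology-basis computation.
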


\section{Existence proof}\label{sec:proof}

\begin{figure*}
  \includegraphics[width=0.6 \textwidth]{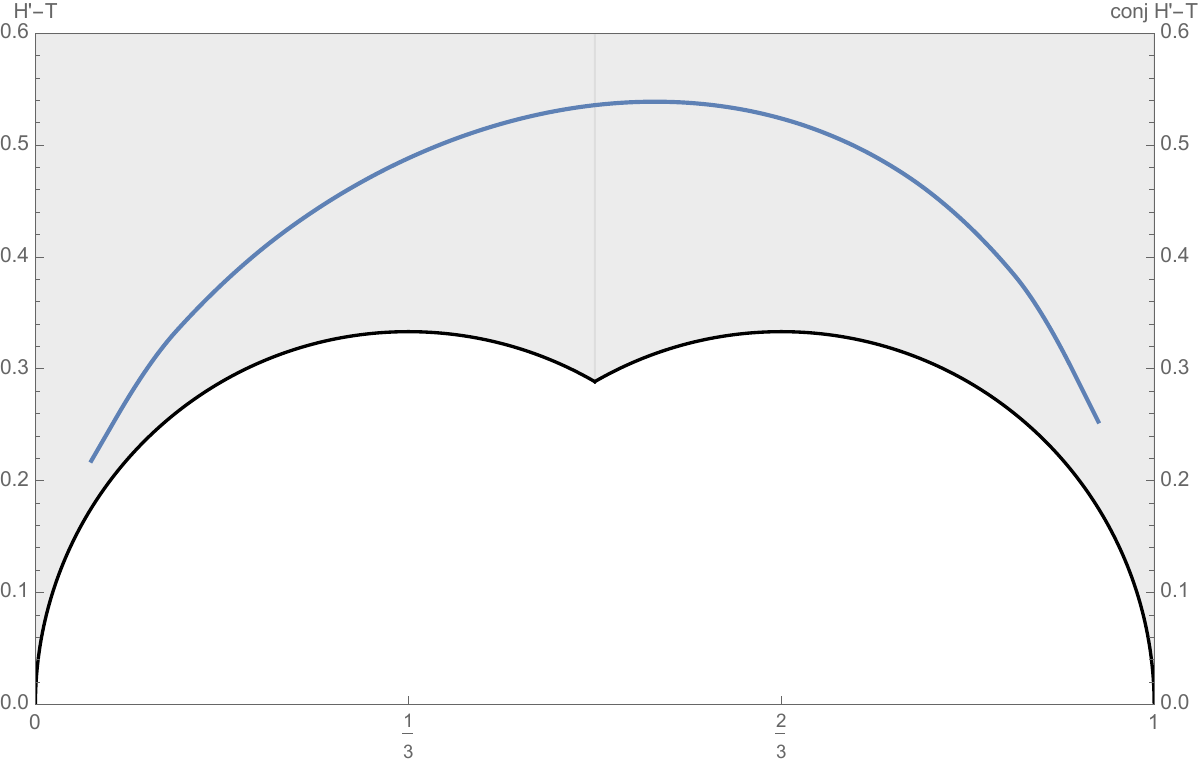}
	\caption{\label{fig:tau}
		Solution to the period problem~\eqref{eq:period} with $0 < \re\tau < 1$.
	}
\end{figure*}

In Figure~\ref{fig:tau}, we show the numerical solutions to~\eqref{eq:period}
with $0 < \re \tau < 1$, accompanied by two circular arcs.  Our task is to
prove the existence of the continuous 1-parameter solution curve that we see in
the picture.  Let the shaded domain in the figure be denoted by $\Omega_t := \{
\tau \mid \im\tau>0, 0<\re\tau<1, |\tau - 1/3| > 1/3, |\tau - 2/3| > 1/3\}$.

\begin{proposition}\label{existence-proof-proposition}
	There exists a continuous 1-dimensional curve of $\tau$ in $\Omega_t$ that
	solves~\eqref{eq:period}.  This curve tends to $0$ at one end and to $1$ at
	the other end.  Moreover, the TPMSs of genus 4 represented by points on the
	curve are all embedded.
\end{proposition}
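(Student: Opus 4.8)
The plan is to recast the period condition \eqref{eq:period} as the vanishing of a single real-valued function and to extract its zero set by a continuity argument on $\Omega_t$. First I would record that the period $\psi(\tau) = \int_0^{\tau/3} G\,dz$ is a period of a family of Weierstrass data varying holomorphically in $\tau$ (the integrand in \eqref{eq:G} and the endpoint $\tau/3$ both depend holomorphically on $\tau$, and the branch cuts of Section~\ref{sec:period} can be avoided along the integration path), so $\psi$ is holomorphic on $\Omega_t$. Consequently $F(\tau) := i\,\psi(\tau)/(\tau-1)$ is holomorphic on $\Omega_t$, since $\tau = 1$ lies outside. In the notation of the preceding lemma, the period defect is $d(\tau) := \theta_h(\tau) - \theta_v(\tau) = \arg\psi(\tau) - \arg(\tau-1) + \pi/2 = \arg F(\tau)$, and the period problem is exactly $d(\tau) = 0$, equivalently $\im F(\tau) = 0$ together with $\re F(\tau) > 0$. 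Since $F$ is holomorphic, $\im F$ is harmonic, and the task reduces to showing that its zero level set contains a connected arc from $\tau = 0$ to $\tau = 1$ along which $\re F > 0$.

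Next I would determine the sign of $d$ on $\partial\Omega_t$. The two circular arcs $|\tau - 1/3| = 1/3$ and $|\tau - 2/3| = 1/3$ are loci of an anti-holomorphic symmetry of the branched torus: the configuration of zeros and poles of $G^2$ fixed in Lemma~\ref{lem:branchH} is preserved by a reflection carrying $\tau$ to a point on the same arc, which forces $\psi(\tau)$ to be real or purely imaginary there and thus pins $\arg\psi(\tau)$, hence $d(\tau)$, to a definite value on each arc. Comparing this with the explicitly computable $\arg(\tau-1) - \pi/2$ along each arc, I expect to find that $d$ is strictly negative on one arc and strictly positive on the other. The corners $\tau \to 0$ and $\tau \to 1$ are the two degenerations of the torus, the Scherk saddle-tower and doubly periodic Scherk limits respectively; I would extract the asymptotics of $\psi$ in each limit from the modular behaviour of the theta quotient in \eqref{eq:G} to confirm that $d \to 0$ at both corners, so that the sign change on the boundary is realized by a curve terminating exactly at $0$ and at $1$.

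With opposite signs of $\im F$ on two disjoint boundary pieces, the solution curve follows from a separation argument: $\Omega_t$ is a connected, simply connected curvilinear region, and the open sets $\{\im F > 0\}$ and $\{\im F < 0\}$ each accumulate on the boundary near one corner, so the real-analytic zero set $\{\im F = 0\}$ must contain a component separating them and joining the neighbourhoods of $0$ and $1$; the sign information guarantees $\re F > 0$ on this component, so \eqref{eq:period} holds exactly and not on a spurious branch. Regularity of this component as a $1$-manifold, away from isolated critical points of the harmonic function, comes from holomorphy of $F$, and the argument principle applied to $F$ on horizontal slices controls the number of crossings of the imaginary axis, ruling out degenerate accumulation and confirming that the component is a genuine arc from $0$ to $1$. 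I expect this boundary sign analysis, rather than the topology, to be the main obstacle, since it demands pinning $\arg\psi$ on the arcs and in both degenerations precisely enough to fix the sign of $d$.

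Finally, for embeddedness I would argue piece by piece. By Section~\ref{sec:period} each fundamental domain of the order-$6$ screw symmetry is a twisted catenoid bounded by right-angled curved triangles or hexagons; near the saddle-tower end these pieces are $C^1$-close to genuine Scherk necks and hence embedded, and the screw together with the horizontal order-$2$ rotations arranges the translated and rotated copies into a non-overlapping, space-filling pattern separating the \texttt{qtz} and \texttt{qzd} labyrinths. Embeddedness is open along the connected solution curve and can fail only through a tangential self-contact of this packing, which is excluded for $\tau \in \Omega_t$ by the separation of the two dual labyrinths; since embeddedness holds at the saddle-tower limit by~\cite{chen2024}, it then propagates along the whole curve, completing the proof of Proposition~\ref{existence-proof-proposition}.
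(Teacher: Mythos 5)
Your outline matches the paper's proof in skeleton---recast \eqref{eq:period} as the zero set of the real-analytic defect $d=\theta_h-\theta_v$, determine its sign on $\partial\Omega_t$, extract a separating curve by continuity, and propagate embeddedness from the saddle-tower limit---but the step you yourself flag as the main obstacle, the boundary sign analysis, is where your plan goes wrong, and in two ways. First, the predicted sign structure is incorrect: in the paper, $\theta_h<\theta_v$ on \emph{both} circular arcs, while $\theta_h>\theta_v$ on both vertical lines $\re\tau=0$ and $\re\tau=1$ (which correspond to the H'-T family and its conjugate) and as $\im\tau\to\infty$. The solution curve therefore separates the two arcs from the two vertical lines and infinity, and it terminates at $0$ and $1$ precisely because those corners are where boundary pieces of opposite sign meet; there is no step (and no need) of showing $d\to 0$ at the corners. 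Your guess of opposite signs on the two arcs would place the curve's ends elsewhere (e.g.\ emanating near the arcs' intersection at $\tau=1/2+i/(2\sqrt3)$), and you omit the vertical lines and the end $\im\tau\to\infty$ entirely, without which no separation argument can confine the zero set inside $\Omega_t$ or force the endpoints $0$ and $1$. Second, the mechanism you propose on the arcs---an antiholomorphic symmetry pinning $\arg\psi$ to $0$ or $\pi/2$---fails. On $|\tau-1/3|=1/3$ the symmetry only makes $G$ real positive on the segment from $0$ to $\tau/3$, giving $\arg\psi=\arg\tau$, a value that \emph{varies} along the arc; the inequality $\theta_v>\arg\tau=\theta_h$ then comes from elementary geometry. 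On $|\tau-2/3|=1/3$ there is no pinning at all: the paper tracks $\arg G$ and $|G|$ along six boundary segments, sketches the image with $\theta=\theta_v-\pi/2$ explicitly, passes to the conjugate surface (a free-boundary minimal surface in a prism over a kite), and reads off that the $y$-coordinates of the images of $0$ and $\tau/3$ switch order, yielding only the inequality $\theta_h\in(\theta_v-\pi/2,\theta_v)$. Finally, the asymptotics $\theta_h\to(1/2-\re\tau/3)\pi$ as $\im\tau\to\infty$ requires a genuine computation (the paper's theta-expansion argument splitting $\psi$ into $\psi_1+\psi_2$ and showing $\psi_2$ dominates), which nothing in your outline produces.

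On the parts you do have: the observation that $\psi$ is holomorphic in $\tau$ so that the zero set is real-analytic, and the extraction of a continuous curve from a connected component of that set, is exactly the paper's closing argument. For embeddedness, anchoring at the saddle-tower limit via the embedded \emph{and unique} surfaces of \cite{chen2024} (uniqueness is what places them on the curve) and propagating along the connected family is also the paper's route; but your stated reason that self-contact is excluded---``separation of the two dual labyrinths''---is circular, since the labyrinth decomposition presupposes embeddedness. The correct tool is \cite[Proposition~5.6]{weyhaupt2006}, essentially proved in \cite{meeks1990}, which rules out loss of embeddedness in a continuous family of TPMSs via the maximum principle.
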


The proof for this proposition is found in Appendix
\ref{suppmatsec:proof-of-existence}. In its essence, it constructs a solution
to the period problem in terms of the parameter $\tau$. Surfaces with the same
symmetry near the saddle tower limit have been constructed in~\cite{chen2024}
by a gluing constructions. Those surfaces have been proven to be embedded and
unique, and hence must belong to the curve above near the limit 0.  Their
embeddedness then ensures the embeddedness of all TPMSs on the curve.  This
follows from~\cite[Proposition~5.6]{weyhaupt2006}, which was essentially proved
in~\cite{meeks1990}.

In Figure~\ref{fig:gallery}, we show four gyrating H'-T surfaces.

\begin{figure*}
	\begin{center}
		\includegraphics[height=0.35\textwidth]{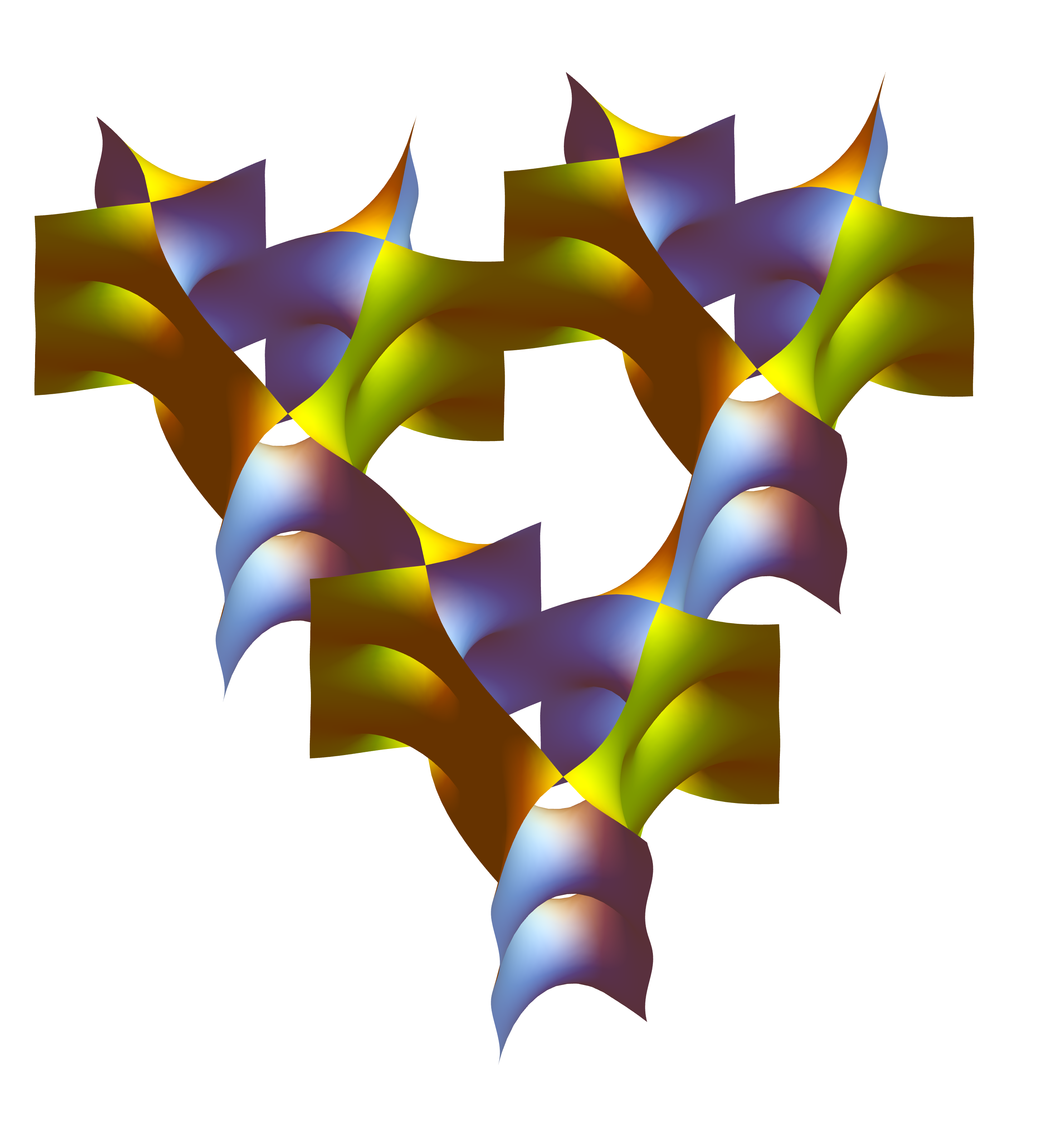}
		\includegraphics[height=0.35\textwidth]{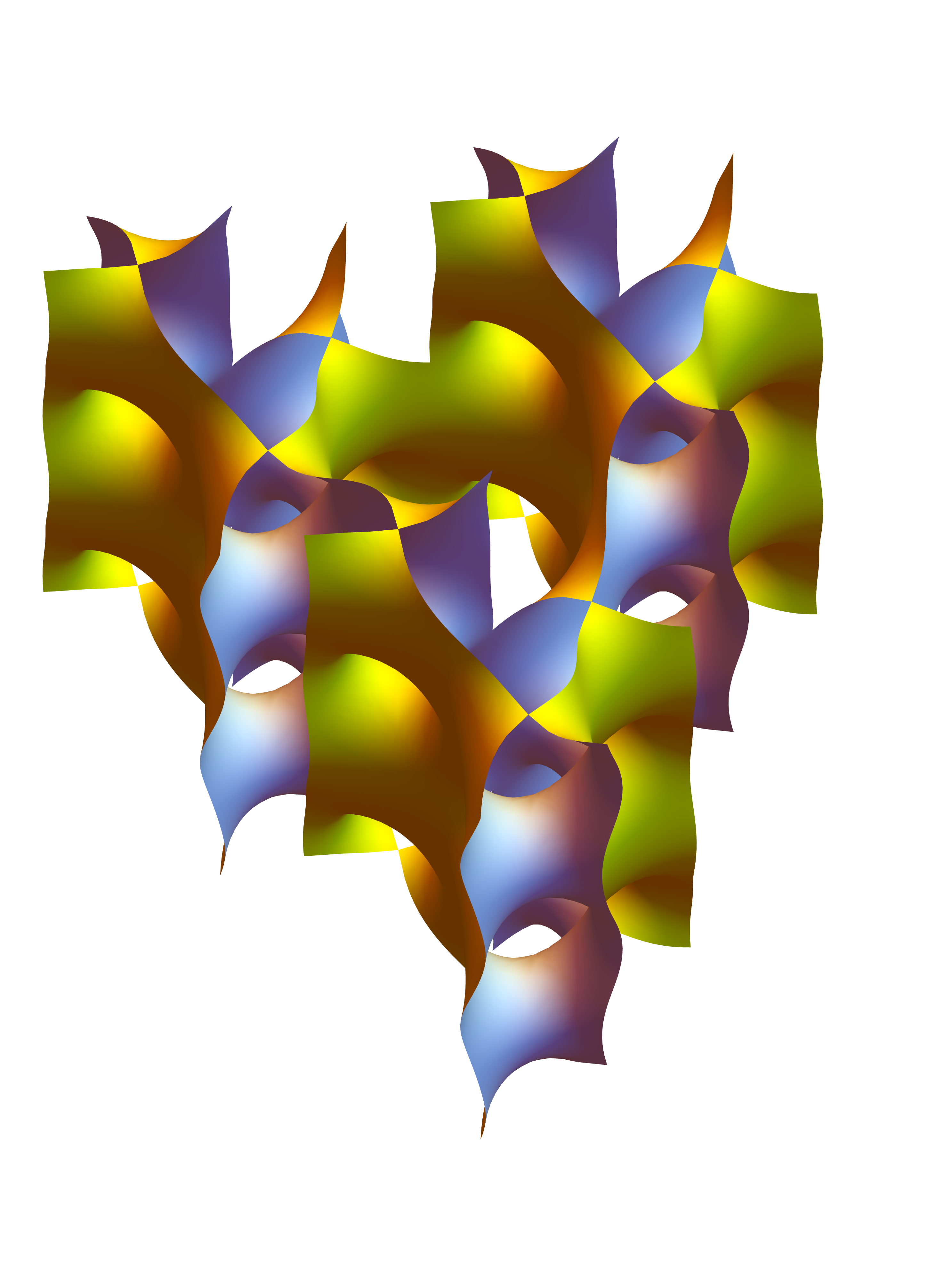}
		\includegraphics[height=0.35\textwidth]{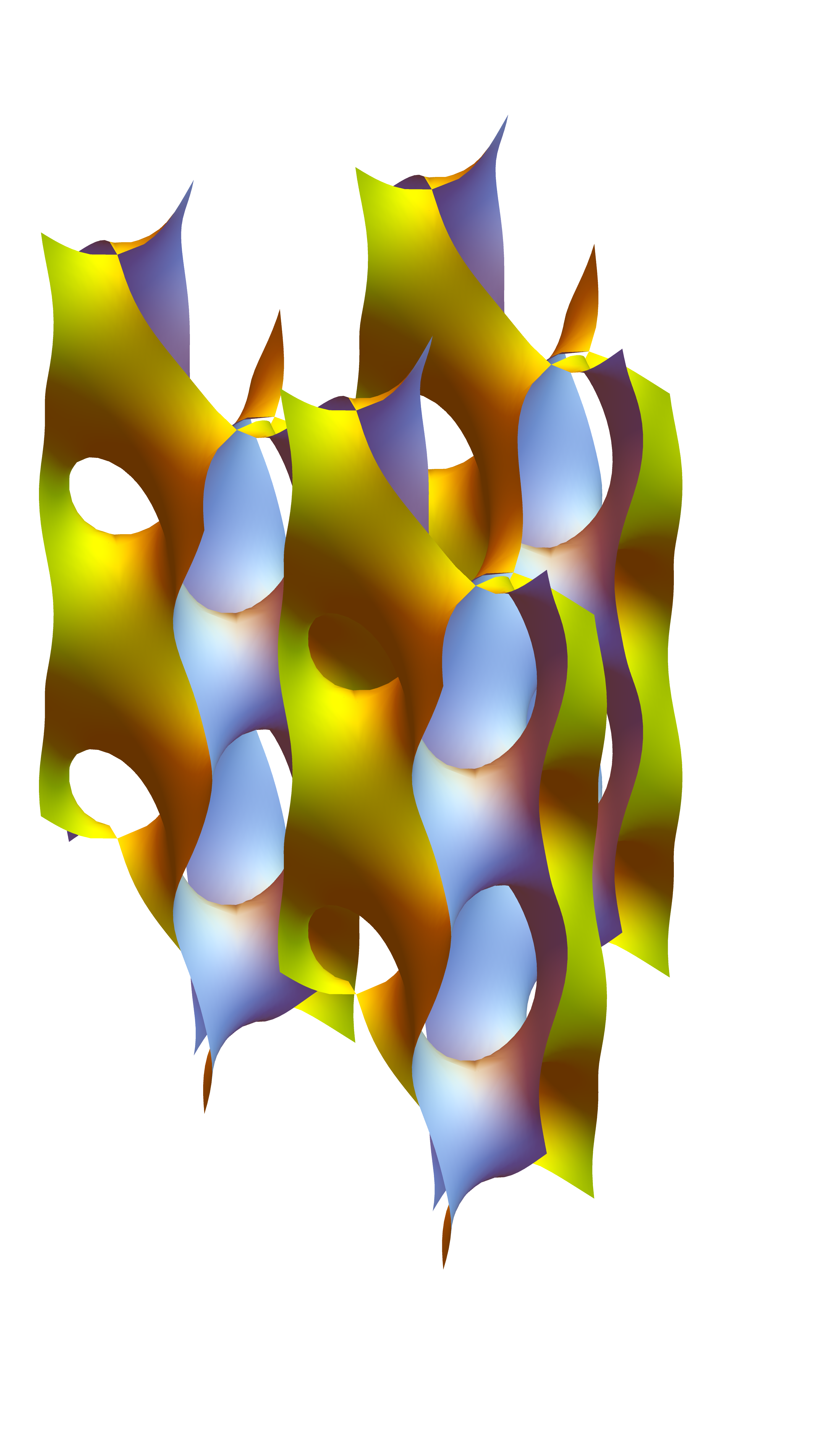}
		\includegraphics[height=0.35\textwidth]{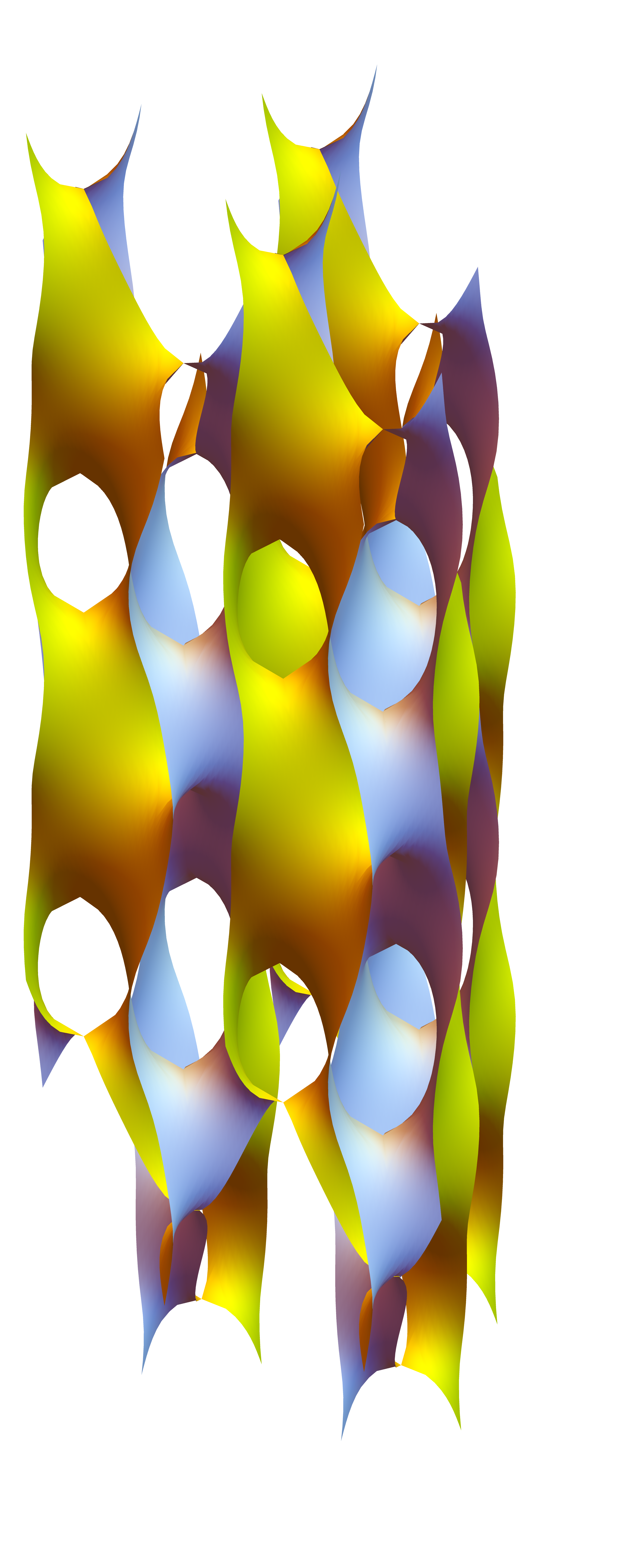}
		\caption{
			Gyrating H'-T surfaces with $\re\tau = 0.2, 0.4, 0.6, 0.8$, from left to right.
		}
		\label{fig:gallery}
  \end{center} 
\end{figure*}

\subsection{Acknowledgement}

We are grateful to Michael O'Keeffe for his suggestion of the
\texttt{qtz}-\texttt{qzd} as a chiral pair of networks with a tunable pitch. We
are grateful to Robert Kusner and Karsten Gro{\ss}e-Brauckmann for early
discussions of the period problem.

\bibliography{References}

\newcommand{\etalchar}[1]{$^{#1}$}
\begin{thebibliography}{dCDFHO13}

\bibitem[BDFOP07]{Blatov_2007}
Vladislav~A. Blatov, Olaf Delgado-Friedrichs, Michael O'Keeffe, and Davide~M.
  Proserpio.
\newblock Three-periodic nets and tilings: natural tilings for nets.
\newblock {\em Acta Crystallographica Section A Foundations of
  Crystallography}, 63(5):418--425, August 2007.

\bibitem[Bra92]{Brakke_1992}
Kenneth~A. Brakke.
\newblock The surface evolver.
\newblock {\em Experimental Mathematics}, 1(2):141--165, January 1992.

\bibitem[Bra25]{BrakkeTPMSWebArchive}
Ken Brakke.
\newblock Triply-periodic minimal surfaces.
\newblock Technical report,
  https://kenbrakke.com/evolver/examples/periodic/periodic.html, 2025.

\bibitem[CF22]{chen2022}
Hao Chen and Daniel Freese.
\newblock Helicoids and vortices.
\newblock {\em Proc. A.}, 478(2267):Paper No. 20220431, 18, 2022.

\bibitem[Che21a]{Chen_2021}
Hao Chen.
\newblock Existence of the tetragonal and rhombohedral deformation families of
  the gyroid.
\newblock {\em Indiana University Mathematics Journal}, 70(4):1543–1576,
  2021.

\bibitem[Che21b]{chen2021}
Hao Chen.
\newblock Existence of the tetragonal and rhombohedral deformation families of
  the gyroid.
\newblock {\em Indiana Univ. Math. J.}, 70(4):1543--1576, 2021.

\bibitem[CT24]{chen2024}
Hao Chen and Martin Traizet.
\newblock Gluing {K}archer-{S}cherk saddle towers {I}: triply periodic minimal
  surfaces.
\newblock {\em J. Reine Angew. Math.}, 808:1--47, 2024.

\bibitem[CW21a]{Chen2021a}
Hao Chen and Matthias Weber.
\newblock A new deformation family of schwarz’ d surface.
\newblock {\em Transactions of the American Mathematical Society},
  374(4):2785–2803, January 2021.

\bibitem[CW21b]{Chen2021b}
Hao Chen and Matthias Weber.
\newblock An orthorhombic deformation family of {S}chwarz' {H} surfaces.
\newblock {\em Trans. Amer. Math. Soc.}, 374(3):2057--2078, 2021.

\bibitem[dCDFHO13]{de_Campo_2013}
Liliana de~Campo, Olaf Delgado-Friedrichs, Stephen~T. Hyde, and Michael
  O'Keeffe.
\newblock Minimal nets and minimal minimal surfaces.
\newblock {\em Acta Crystallographica Section A Foundations of
  Crystallography}, 69(5):483--489, July 2013.

\bibitem[DFOY03]{DelgadoF_2_2003}
Olaf Delgado~Friedrichs, Michael O'Keeffe, and Omar~M. Yaghi.
\newblock The cdso4, rutile, cooperite and quartz dual nets: interpenetration
  and catenation.
\newblock {\em Solid State Sciences}, 5(1):73 -- 78, 2003.
\newblock Dedicated to Sten Andersson for his scientific contribution to Solid
  State and Structural Chemistry.

\bibitem[DGMG25]{DimitriyevGrason2025}
Michael~S. Dimitriyev, Benjamin~R. Greenvall, Rejoy Mathew, and Gregory~M.
  Grason.
\newblock Not even metastable: Cubic double-diamond in diblock copolymer melts.
\newblock {\em ACS Macro Letters}, 14(9):1291--1298, 2025.
\newblock PMID: 40864671.

\bibitem[{\relax DLMF}]{DLMF}
{\it NIST Digital Library of Mathematical Functions}.
\newblock \texttt{http://dlmf.nist.gov/}, Release 1.0.19 of 2018-06-22.
\newblock F.~W.~J. Olver, A.~B. {Olde Daalhuis}, D.~W. Lozier, B.~I. Schneider,
  R.~F. Boisvert, C.~W. Clark, B.~R. Miller and B.~V. Saunders, eds.

\bibitem[FH92a]{Fogden_i_1992}
A.~Fogden and S.~T. Hyde.
\newblock Parametrization of triply periodic minimal surfaces. i. mathematical
  basis of the construction algorithm for the regular class.
\newblock {\em Acta Crystallographica Section A Foundations of
  Crystallography}, 48(4):442--451, July 1992.

\bibitem[FH92b]{Fogden_ii_1992}
A.~Fogden and S.~T. Hyde.
\newblock Parametrization of triply periodic minimal surfaces. ii. regular
  class solutions.
\newblock {\em Acta Crystallographica Section A Foundations of
  Crystallography}, 48(4):575--591, July 1992.

\bibitem[FH99]{fogden1999}
Andrew Fogden and Stephan~T. Hyde.
\newblock Continuous transformations of cubic minimal surfaces.
\newblock {\em The European Physical Journal B-Condensed Matter and Complex
  Systems}, 7(1):91--104, 1999.

\bibitem[FK92]{farkas1992}
H.~M. Farkas and I.~Kra.
\newblock {\em Riemann surfaces}, volume~71 of {\em Graduate Texts in
  Mathematics}.
\newblock Springer-Verlag, New York, second edition, 1992.

\bibitem[Fog93]{Fogden_iii_1993}
A.~Fogden.
\newblock Parametrization of triply periodic minimal surfaces. iii. general
  algorithm and specific examples for the irregular class.
\newblock {\em Acta Crystallographica Section A Foundations of
  Crystallography}, 49(3):409--421, May 1993.

\bibitem[FW09]{Fujimori_2009}
Shoichi Fujimori and Matthias Weber.
\newblock Triply periodic minimal surfaces bounded by vertical symmetry planes.
\newblock {\em manuscripta mathematica}, 129(1):29--53, May 2009.

\bibitem[GB97]{KGB_1997}
Karsten Gro{\ss}e-Brauckmann.
\newblock Gyroids of constant mean curvature.
\newblock {\em Experimental Mathematics}, 6(1):33--50, January 1997.

\bibitem[GBW96]{kgb1996}
Karsten Gro{\ss}e-Brauckmann and Meinhard Wohlgemuth.
\newblock The gyroid is embedded and has constant mean curvature companions.
\newblock {\em Calc. Var. Partial Differential Equations}, 4(6):499--523, 1996.

\bibitem[GK00]{Gandy_i_2000}
Paul~J.F Gandy and Jacek Klinowski.
\newblock Exact computation of the triply periodic g (`gyroid') minimal
  surface.
\newblock {\em Chemical Physics Letters}, 321(5):363 -- 371, 2000.

\bibitem[HAL{\etalchar{+}}97]{Hyde_1997}
S.~T Hyde, S.~Andersson, K.~Larsson, Z.~Blum, T.~Landh, S.~Lidin, and B.W.
  Ninham.
\newblock {\em The language of shape: the role of curvature in condensed
  matter: physics, chemistry and biology}.
\newblock Elsevier, Amsterdam, 1997.

\bibitem[HdCO09]{Hyde_2009}
Stephen~T. Hyde, Liliana de~Campo, and Christophe Oguey.
\newblock Tricontinuous mesophases of balanced three-arm ``star polyphiles''.
\newblock {\em Soft Matter}, 5(14):2782, 2009.

\bibitem[HOP08]{hydeUbiquitousSRS}
Stephen~T. Hyde, Michael O'Keeffe, and Davide~M. Proserpio.
\newblock A short history of an elusive yet ubiquitous structure in chemistry,
  materials, and mathematics.
\newblock {\em Angewandte Chemie International Edition}, 47(42):7996--8000,
  2008.

\bibitem[HPK{\etalchar{+}}25]{Himmelmann2025}
M~Himmelmann, MC~Pedersen, MA~Klatt, PWA {Sch\"onh\"ofer}, ME~Evans, and
  GE~{Schr\"oder-Turk}.
\newblock Amorphous bicontinuous minimal surface models and the superior
  gaussian curvature uniformity of diamond, primitive and gyroid surfaces.
\newblock {\em Proc.\ R.\ Soc.\ A}, 2025.

\bibitem[HST24]{SchoenObituary}
Stephen~T. Hyde and Gerd~E Schroeder-Turk.
\newblock Alan hugh schoen.
\newblock {\em Physics Today}, 2024(01):.ykfu, January 2024.

\bibitem[Hyd90]{Hyde_1990}
S.~T. Hyde.
\newblock Curvature and the global structure of interfaces in surfactant-water
  systems.
\newblock {\em Le Journal de Physique Colloques}, 51(C7):C7--209--C7--228,
  December 1990.

\bibitem[KF88]{Koch_1988}
Elke Koch and Werner Fischer.
\newblock On 3-periodic minimal surfaces with non-cubic symmetry.
\newblock {\em Zeitschrift f{\"u}r Kristallographie - Crystalline Materials},
  183(1-4), January 1988.

\bibitem[KP96]{Karcher_1996}
H.~Karcher and K.~Polthier.
\newblock Construction of triply periodic minimal surfaces.
\newblock {\em Philosophical Transactions of the Royal Society A: Mathematical,
  Physical and Engineering Sciences}, 354(1715):2077--2104, September 1996.

\bibitem[Law89]{lawden1989}
Derek~F. Lawden.
\newblock {\em Elliptic functions and applications}, volume~80 of {\em Applied
  Mathematical Sciences}.
\newblock Springer-Verlag, New York, 1989.

\bibitem[LM03]{Lord_2003}
Eric~A. Lord and Alan~L. Mackay.
\newblock Periodic minimal surfaces of cubic symmetry.
\newblock {\em Current Science}, 85(3):346 -- 362, 2003.

\bibitem[LR91]{lopez1991}
Francisco~J. L\'{o}pez and Antonio Ros.
\newblock On embedded complete minimal surfaces of genus zero.
\newblock {\em J. Differential Geom.}, 33(1):293--300, 1991.

\bibitem[Mee90]{meeks1990}
William~H. Meeks, III.
\newblock The theory of triply periodic minimal surfaces.
\newblock {\em Indiana Univ. Math. J.}, 39(3):877--936, 1990.

\bibitem[MSSTM18]{preprint}
Shashank~Ganesh Markande, Matthias Saba, Gerd Schroeder-Turk, and Elisabetta~A.
  Matsumoto.
\newblock A chiral family of triply-periodic minimal surfaces derived from the
  quartz network, 2018.

\bibitem[MSTM12]{Mickel2012}
Walter Mickel, Gerd~E. Schr\"{o}der-Turk, and Klaus Mecke.
\newblock Tensorial minkowski functionals of triply periodic minimal surfaces.
\newblock {\em Interface Focus}, 2(5):623–633, June 2012.

\bibitem[OPRY08]{OKeeffe_2008}
Michael O'Keeffe, Maxim~A. Peskov, Stuart~J. Ramsden, and Omar~M. Yaghi.
\newblock The reticular chemistry structure resource (rcsr) database of, and
  symbols for, crystal nets.
\newblock {\em Accounts of Chemical Research}, 41(12):1782--1789, 2008.
\newblock PMID: 18834152.

\bibitem[Oss64]{osserman1964}
Robert Osserman.
\newblock Global properties of minimal surfaces in {$E^{3}$} and {$E^{n}$}.
\newblock {\em Ann. of Math. (2)}, 80:340--364, 1964.

\bibitem[PP93]{Pinkall_1993}
Ulrich Pinkall and Konrad Polthier.
\newblock Computing discrete minimal surfaces and their conjugates.
\newblock {\em Experimental Mathematics}, 2(1):15--36, January 1993.

\bibitem[RKE{\etalchar{+}}05]{Rosi_2005}
Nathaniel~L. Rosi, Jaheon Kim, Mohamed Eddaoudi, Banglin Chen, Michael
  O'Keeffe, and Omar~M. Yaghi.
\newblock Rod packings and metal-organic frameworks constructed from rod-shaped
  secondary building units.
\newblock {\em Journal of the American Chemical Society}, 127(5):1504--1518,
  2005.
\newblock PMID: 15686384.

\bibitem[SC89]{Sadoc_1989}
J~F. Sadoc and J.~Charvolin.
\newblock Infinite periodic minimal surfaces and their crystallography in the
  hyperbolic plane.
\newblock {\em Acta Crystallographica Section A Foundations of
  Crystallography}, 45(1):10--20, January 1989.

\bibitem[Sch70]{Schoen_1970}
A.~H. Schoen.
\newblock Infinite periodic minimal surfaces without self-intersections.
\newblock {\em NASA Technical Note}, 1970.

\bibitem[Sch12]{Schoen_2012}
Alan~H. Schoen.
\newblock Reflections concerning triply-periodic minimal surfaces.
\newblock {\em Interface Focus}, 2(5):658 -- 668, 2012.

\bibitem[SKM{\etalchar{+}}07]{ShearmanSeddonCMC2007}
Gemma~C. Shearman, Bee~J. Khoo, Mary-Lynn Motherwell, Kenneth~A. Brakke, Oscar
  Ces, Charlotte~E. Conn, John~M. Seddon, and Richard~H. Templer.
\newblock Calculations of and evidence for chain packing stress in inverse
  lyotropic bicontinuous cubic phases.
\newblock {\em Langmuir}, 23(13):7276--7285, 2007.
\newblock PMID: 17503862.

\bibitem[STdCE{\etalchar{+}}13]{SchrderTurk2013}
Gerd~E. Schr\"{o}der-Turk, Liliana de~Campo, Myfanwy~E. Evans, Matthias Saba,
  Sebastian~C. Kapfer, Trond Varslot, Karsten Grosse-Brauckmann, Stuart
  Ramsden, and Stephen~T. Hyde.
\newblock Polycontinuous geometries for inverse lipid phases with more than two
  aqueous network domains.
\newblock {\em Faraday Discuss.}, 161:215–247, 2013.

\bibitem[STFH06]{schroderturk2006}
Gerd~E Schr{\"o}der-Turk, Andrew Fogden, and Stephen~T Hyde.
\newblock Bicontinuous geometries and molecular self-assembly: comparison of
  local curvature and global packing variations in genus-three cubic,
  tetragonal and rhombohedral surfaces.
\newblock {\em The European Physical Journal B-Condensed Matter and Complex
  Systems}, 54(4):509--524, 2006.

\bibitem[STVdC{\etalchar{+}}11]{SchrderTurk2011}
Gerd~E. Schr\"{o}der-Turk, Trond Varslot, Liliana de~Campo, Sebastian~C.
  Kapfer, and Walter Mickel.
\newblock A bicontinuous mesophase geometry with hexagonal symmetry.
\newblock {\em Langmuir}, 27(17):10475–10483, August 2011.

\bibitem[Tra08]{traizet2008}
Martin Traizet.
\newblock On the genus of triply periodic minimal surfaces.
\newblock {\em J. Differential Geom.}, 79(2):243--275, 2008.

\bibitem[vSN91]{vonSchnering1991}
H.~G. von Schnering and R.~Nesper.
\newblock Nodal surfaces of fourier series: Fundamental invariants of
  structured matter.
\newblock {\em Zeitschrift f\"{u}r Physik B Condensed Matter}, 83(3):407–412,
  October 1991.

\bibitem[WCZ{\etalchar{+}}24]{Wang2024}
Shuqi Wang, Hao Chen, Tianyu Zhong, Quanzheng Deng, Shaobo Yang, Yuanyuan Cao,
  Yongsheng Li, and Lu~Han.
\newblock Tetragonal gyroid structure from symmetry manipulation: A brand-new
  member of the gyroid surface family.
\newblock {\em Chem}, 10(5):1406–1424, May 2024.

\bibitem[Wey06]{weyhaupt2006}
Adam~G. Weyhaupt.
\newblock {\em New families of embedded triply periodic minimal surfaces of
  genus three in euclidean space}.
\newblock ProQuest LLC, Ann Arbor, MI, 2006.
\newblock Thesis (Ph.D.)--Indiana University.

\bibitem[Wey08]{weyhaupt2008}
Adam~G. Weyhaupt.
\newblock Deformations of the gyroid and {L}idinoid minimal surfaces.
\newblock {\em Pacific J. Math.}, 235(1):137--171, 2008.

\bibitem[WHM75]{Meeks1975}
III William H.~Meeks.
\newblock {\em The Geometry and the Conformal Structure of Triply Periodic
  Minimal Surfaces in R3}.
\newblock PhD thesis, University of California Berkeley, 1975.

\end{thebibliography}
\bibliographystyle{alpha}

\appendix

\section{Use of \texttt{Surface Evolver} to obtain approximate minimal surface}

\begin{figure*}[h]
	\labellist
	\small\hair 2pt
	\pinlabel $\bf{(c)}$ at 10 200
	\pinlabel $\bf{(d)}$ at 400 200
	\pinlabel $\bf{(b)}$ at 400 450
	\pinlabel $\bf{(a)}$ at 10 450
	\pinlabel $\mathcal{S}^*$ at 270 200
	\endlabellist
	\centering
	\includegraphics[scale=0.5]{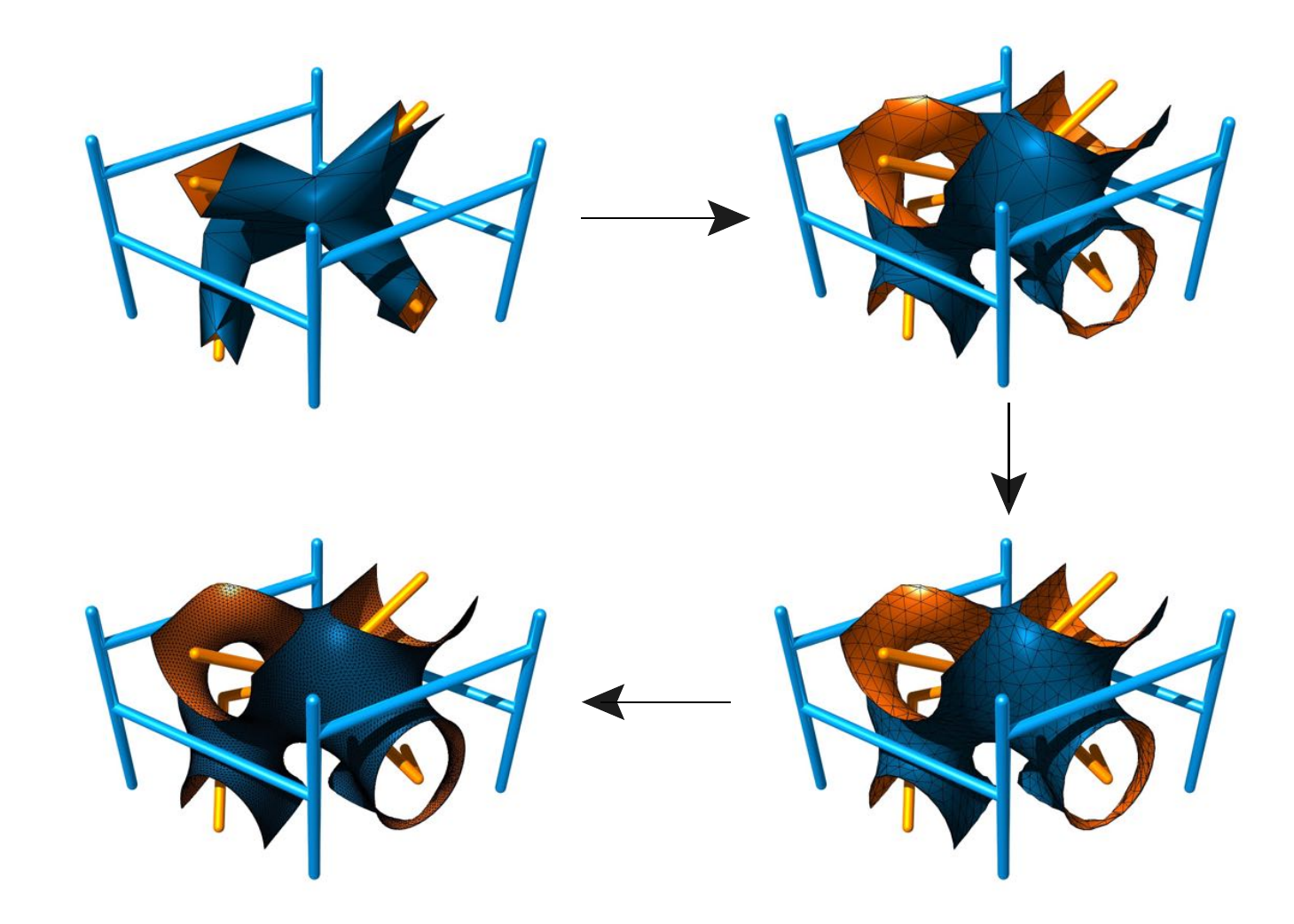}
	\caption{Four different stages illustrating the minimisation of the area functional using conjugate gradient descent method in Surface Evolver \cite{Brakke_1992}. See Appendix A2.2. The boundary conditions respect the translational periodicities imposed by the $P6_222$ space group symmetry with lattice parameters $c= a=1$. The blue coloured network is the {\texttt{qzd}}  network and the orange coloured network is the {\texttt{qtz}} (quartz) network. Figure (a) shows a triangulated tubular neighbourhood of the quartz network and figure (c) shows the final surface, $\mathcal{S}^*$ after convergence.}
	\label{fig:test1}
\end{figure*}

\vfill
\newpage

\section{\label{supp-mat-asympt-behav} Asymptotic behavior}

We now prove that
\begin{lemma}
	\[
		\lim_{\im\tau \to +\infty} \arg \psi(\tau)
		= \lim_{\im\tau \to +\infty} \arg \int_0^{\tau/3} G(z) dz
		= \frac{1}{2} - \frac{\re\tau}{3}.
	\]
\end{lemma}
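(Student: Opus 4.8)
The plan is to insert the explicit expression \eqref{eq:G} for $G^2$ into $\psi(\tau)=\int_0^{\tau/3}G(z)\,dz$ and to extract the leading behaviour of the integrand as $\im\tau\to+\infty$ from the $q\to 0$ asymptotics of the Jacobi theta functions (the relevant nome being $q^3=e^{3i\pi\tau}$). The two theta factors in \eqref{eq:G} are $\theta(3z;3\tau)$ and $\theta(3z-\tau;3\tau)$; along the segment $z\in[0,\tau/3]$ the first is large and I may replace it by its leading exponential term, whereas the second vanishes at the endpoint $z=\tau/3$ and must be kept as a sine so as to reproduce the pole of $G^2$ there. Carrying out this reduction, and using the normalisation of the L\'opez--Ros factor $\rho'$ (fixed by $G^2(\tau/6)=1$), I expect to obtain, uniformly away from the zero $z=0$, a leading form of the shape
\[
	G^2(z)\ \sim\ e^{-2i\pi\tau/3}\,\frac{i\,e^{i\pi v/3}}{2\sin\pi v}, \qquad v:=\tau-3z .
\]

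First I would change variables to $v=\tau-3z$, which maps $z\in[0,\tau/3]$ onto the segment $v\in[0,\tau]$ and turns the height element $dz$ into $-\tfrac13\,dv$. The explicit prefactor $e^{-2i\pi\tau/3}$ pulls out of the square root as $e^{-i\pi\tau/3}$ (the branch of $G=\sqrt{G^2}$ being fixed by continuity along the path and by the branch cuts of Figure~\ref{fig:branchedTorus}), so that to leading order
\[
	\psi(\tau)\ \sim\ \frac{e^{-i\pi\tau/3}}{3}\int_0^{\tau}\sqrt{\frac{i\,e^{i\pi v/3}}{2\sin\pi v}}\,dv .
\]
The crucial observation is that the remaining integrand has an integrable $v^{-1/2}$ singularity at $v=0$ and decays like $e^{2i\pi v/3}$ as $\im v\to+\infty$; hence the integral converges and, because $\arg\tau\to\pi/2$, I can let the contour limit to the vertical ray $v=is$, $s\ge 0$, avoiding the theta zeros at the integers. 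Along that ray $\sin(i\pi s)=i\sinh\pi s$ and $e^{i\pi(is)/3}=e^{-\pi s/3}$ are real and positive, so the ray integral equals $i$ times a positive real constant and therefore has argument exactly $\pi/2$.

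Combining the two factors gives the leading asymptotic $\psi(\tau)\sim\tfrac{i}{2\pi}\,e^{-i\pi\tau/3}$ (equivalently $\psi\sim\sin(\pi\tau/3)/\pi$), whose argument is $\tfrac{\pi}{2}-\tfrac{\pi}{3}\re\tau$; reading this off yields the stated limit, and the same value is obtained from the companion representation $\psi=\int_0^{\tau/3}G\,dz$ used in the lemma. \textbf{The main obstacle} is analytic control of the reduction near the two singular endpoints: the path runs from a zero of $G^2$ at $z=0$ to a pole at $z=\tau/3$, both of which are branch points of $G$, so I must track the square-root branch consistently, show that the theta asymptotics are uniform enough that the error terms stay negligible after integration despite the integrand blowing up at $z=\tau/3$, and justify the contour deformation to the vertical ray. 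The payoff of the $v$-substitution is that all of the $\re\tau$-dependence is isolated in the elementary prefactor $e^{-i\pi\tau/3}$, while the surviving integral contributes only the fixed phase $\pi/2$, so once the endpoint estimates are in place the argument computation is immediate.
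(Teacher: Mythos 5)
Your proposal is correct and uses the same underlying asymptotic machinery as the paper (the $q\to 0$ leading term of the theta functions in \eqref{eq:G}), but it executes the reduction differently, and the difference is instructive. The paper splits $\psi = \psi_1 + \psi_2$ at the symmetry point $\tau/6$, approximates the sine in the denominator (resp.\ numerator) by its dominant exponential on each half, and then shows by explicit substitutions that $\psi_1$ stays bounded while $\psi_2 \sim \frac{i}{\pi} I e^{-i\pi\tau/3}$ with $I = \int_0^1 x(1-x^6)^{-1/2}dx$ real and positive, so that $\psi_2$ dominates and carries the phase. You instead keep a single uniform approximation $G^2 \sim e^{-2i\pi\tau/3}\frac{i e^{i\pi v/3}}{2\sin\pi v}$, $v = \tau - 3z$, on the whole segment and deform the residual contour to the vertical ray $v = is$, where the integrand is manifestly positive real, so the ray integral is $i$ times a positive constant; this makes the phase $\pi/2$ of the residual factor geometrically transparent rather than emerging from a substitution, at the price of having to justify the deformation and the endpoint control you correctly flag (near $v=\tau$, i.e.\ $z=0$, the approximation degrades, but there both the true and approximate integrands are of size $O(e^{-\pi\im\tau/3})$, exponentially small against the main term of size $e^{\pi\im\tau/3}$, so the error washes out — this is the content the paper packages as the boundedness of $\psi_1$). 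One slip: your constant is off. Along the ray, with $x = e^{-\pi s/3}$,
\begin{equation*}
	\int_0^\infty \sqrt{\frac{e^{-\pi s/3}}{2\sinh \pi s}}\, ds
	= \int_0^\infty \frac{e^{-2\pi s/3}}{\sqrt{1-e^{-2\pi s}}}\, ds
	= \frac{3}{\pi}\int_0^1 \frac{x\,dx}{\sqrt{1-x^6}} = \frac{3I}{\pi},
\end{equation*}
so that $\psi \sim \frac{i}{\pi} I e^{-i\pi\tau/3}$, in exact agreement with the paper; your $\frac{i}{2\pi}e^{-i\pi\tau/3}$ (equivalently $\sin(\pi\tau/3)/\pi$) apparently drops the $(1-e^{-2\pi s})^{-1/2}$ factor. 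Since the dropped factor is positive, your coefficient is still a positive real multiple of $i$, and the argument of $\psi$ — the only thing the lemma asserts — is unaffected; but the exact asymptotic you state is not correct as written and should be repaired or weakened to a phase statement.
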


\begin{proof}
	In the limit $\im\tau \to +\infty$, by the expansion~\cite[(20.2.1)]{DLMF}
	\[
		\theta(z;\tau) = 2 q^{1/4} \sin(z) + o(q^{1/4}),
	\]
	where $q = \exp(i \pi \tau)$, we have
	\begin{align*}
		G(z)
		&\sim
		\sqrt{-\frac{\sin(\pi 3z)}{\sin(\pi (3z - \tau)} e^{i \pi (2z - \tau/3)}}\\
			&= \sqrt{-\frac{e^{i \pi 3z} - e^{-i \pi 3z}}{e^{i \pi (3z - \tau)} - e^{-i \pi (3z - \tau)}}} e^{i \pi (z - \tau/6)},
		\end{align*}
		as $\im\tau \to +\infty$.

		Now we divide $\psi(\tau)$ into two integrals
		\[
			\psi(\tau) = \psi_1(\tau) + \psi_2(\tau) = \int_0^{\tau/6} G(z) dz + \int_{\tau/6}^{\tau/3} G(z) dz
		\]

		In the integral $\psi_1(\tau)$, $\tau - 3z > \tau/2$.  So as $\im \tau \to +\infty$, we 
		\begin{align}
			\psi_1(\tau)
			&\sim \int_0^{\tau/6} \sqrt{\frac{e^{i \pi 3z} - e^{-i \pi 3z}}{e^{-i \pi (3z - \tau)}}} e^{i \pi (z - \tau/6)} dz\nonumber\\
			&= e^{i \pi \tau/3} \int_0^{\tau/6} \sqrt{e^{i \pi 6z} - 1} e^{i \pi z} dz\nonumber\\
			&= \frac{i}{\pi} e^{i \pi \tau/3} \int_1^{e^{-i\pi\tau/6}} \frac{\sqrt{w^6-1}}{w^2} dw  & (w = e^{-i\pi z}).\label{eq:app1}
		\end{align}
		Note that
		\[
			\Big| \frac{\sqrt{w^6-1}}{w^2} \Big| < |w|,
		\]
		we have
		\begin{align*}
			| \text{\eqref{eq:app1}} |
			&\le \frac{1}{\pi} e^{-\pi \im\tau/3} \int_1^{e^{\pi\im\tau/6}} x dx \\
			& = \frac{1}{2\pi} (1 - e^{-\pi\im\tau/3}) \to \frac{1}{2\pi}
		\end{align*}
		as $\im\tau \to +\infty$.  So $\psi_1(\tau)$ is bounded.

		In the integral $\psi_2(\tau)$, $3z > \tau/2$.  So as $\im \tau \to +\infty$, we 
		\begin{align*}
			\psi_2(\tau)
			&\sim \int_{\tau/6}^{\tau/3} \sqrt{\frac{e^{-i \pi 3z}}{e^{i \pi (3z - \tau)} - e^{-i \pi (3z - \tau)}}} e^{i \pi (z - \tau/6)} dz\\
			&= \int_{0}^{\tau/6} \sqrt{\frac{e^{-i \pi (\tau - 3\zeta)}}{e^{- i \pi 3\zeta} - e^{i \pi 3\zeta}}} e^{i \pi (\tau/6 - \zeta)} dz & (3\zeta = \tau - 3z)\\
			&= e^{-i \pi \tau/3} \int_0^{\tau/6} \frac{e^{-i\pi \zeta}}{\sqrt{e^{-i\pi 6\zeta} -1}} d\zeta\\
			&= \frac{i}{\pi} e^{-i \pi \tau/3} \int_1^{e^{-i\pi \tau/6}} \frac{du}{\sqrt{u^6 -1}} & (u = e^{-i \pi \zeta})\\
			&= \frac{i}{\pi} e^{-i \pi \tau/3} \int_1^{e^{i\pi \tau/6}} \frac{-v dv}{\sqrt{1 - v^6}} & (v = 1/w)
		\end{align*}
		Note that the integral in the last line converge to the bounded real integral
		\[
			I = \int_0^1 \frac{x dx}{\sqrt{1 - x^6}} \in \mathbb{R}
		\]
		as $\im\tau \to +\infty$, so
		\[
			\psi_2(\tau) \sim \frac{i}{\pi} I e^{-i\pi \tau/3}.
		\]

		As $\im \tau \to +\infty$, $\psi_2$ is dominant, so
		\[
			\lim_{\im\tau \to +\infty} \arg \psi(\tau) =
			\lim_{\im\tau \to +\infty} \arg \psi_2(\tau) = \frac{1}{2} - \frac{\tau}{3}.
		\]
	\end{proof}

	\section{Proof of Lemma \ref{lemma:screw-symmetry}}\label{sec:proof-of-lemma-screw}
	\begin{proof}
		The height differential $dh$ is invariant under $S$, hence descends
		holomorphically to the quotient $(M/\Lambda)/S$.  Since there is no
		holomorphic differential on the sphere, the genus of $(M/\Lambda)/S$ cannot
		be $0$.

		Recall the Riemann--Hurwitz formula
		\[
			g = n(g' - 1) + 1 + B/2.
		\]
		In our case, $g=4$ is the genus of $M/\Lambda$, $g'$ is the genus of
		$(M/\Lambda)/S$, $n$ is the degree of the quotient map, and $B$ is the total
		branching number.  Since the $S$ is of order at least $2$, we conclude
		immediately that $g' < 3$.  It remains to examine the case $g'=2$.  Without
		loss of generality, we may assume the screw axis to be vertical.  When
		$g'=2$, we have $3 = n + B/2$.  Recall that $n>1$ and that the Gauss map $G$
    represents $M/\Lambda$ as a $3$-sheeted branched cover of $\mathbb{S}^2$
    with branching number $12$.  More specifically, we have $12$ branch points
    of branching number $1$, none fixed by the screw symmetry.

		One solution is given by $n=3$ and $B=0$.  Then $S$ is a screw symmetry of
		order $3$ and $G^3$ represents $(M/\Lambda)/S$ as a $3$-sheeted branched
		cover of $\mathbb{S}^2$ with branching number $4$.  By Riemann--Hurwitz
		formula, $(M/\Lambda)/S$ has genus
    \[
    	g' = 3 \times (0 - 1) + 1 + 4/2 = 0 \ne 2,
    \]
    which is impossible.

		Another solution is given by $n=2$ and $B=2$.  Then $S$ is a screw symmetry
		of order $2$ and $G^2$ represents $(M/\Lambda)/S$ as a $3$-sheeted
		branched cover of $\mathbb{S}^2$ with branching number $6$.  Again By
		Riemann--Hurwitz formula, $(M/\Lambda)$ has genus
    \[
    	g' = 3 \times (0 - 1) + 1 + 6/2 = 1 \ne 2,
    \]
    again impossible.

		So $g'=1$ is the only possibility.
	\end{proof}

  \section{Proof of Lemma \ref{lem:branchH}\label{proof-of-lem:branchH}}
	\begin{proof}
		The screw symmetry of order $6$ descends to the quotient torus as a
		translation that permutes the zeroes and, respectively, the poles.  Let $t \in
		\C$ be the translation vector.  Then $t$ must be a 3-division point.  That
		is, $3t$ must be a lattice point.

		We may assume that the branch point at $0$ corresponds to a zero of $G^2$.
		Then $t$ and $-t$ are also zeroes of $G^2$.  Let the poles of $G^2$ be $p$,
		$p+t$ and $p-t$.  Then Abel's Theorem requires that $3p$ is a lattice point.
		In other words, $p$ is a 3-division point.

		We have then proved that all branch points are at 3-division points.
	\end{proof}

	\section{Proof of Proposition \ref{existence-proof-proposition} (existence)}\label{suppmatsec:proof-of-existence}
	\begin{proof}
		We examine the angles $\theta_v$ and $\theta_h$ on the boundaries of
		$\Omega_t$.

		On the vertical line $\re\tau = 0$, we see immediately that $0 < \theta_v <
		\pi/2$.  With $\theta = \pi/2$, this line corresponds to the H'-T family, we
		know very well that the image of $\tau$ is directly above the image of $0$
		when $dh = e^{-i \pi/2} dz$.  Hence $\theta_h = \pi/2 > \theta_v$.  In
		particular, the image of $\int G \cdot dz$ with $\theta = \pi/2$ is shown in
		Figure~\ref{fig:HTflat} (left).

		On the vertical line $\re\tau = 1$, we see immediately that $\theta_v = 0$.
		With $\theta = 0$, this line corresponds to the conjugate of H'-T surfaces.
		In particular, the image of $\int G \cdot dz$ with $\theta = 0$ is shown in
		Figure~\ref{fig:HTflat} (right), from which we can read that $\theta_h > 0 =
		\theta_v$.

		\begin{figure*}
			\includegraphics[width=0.3\textwidth]{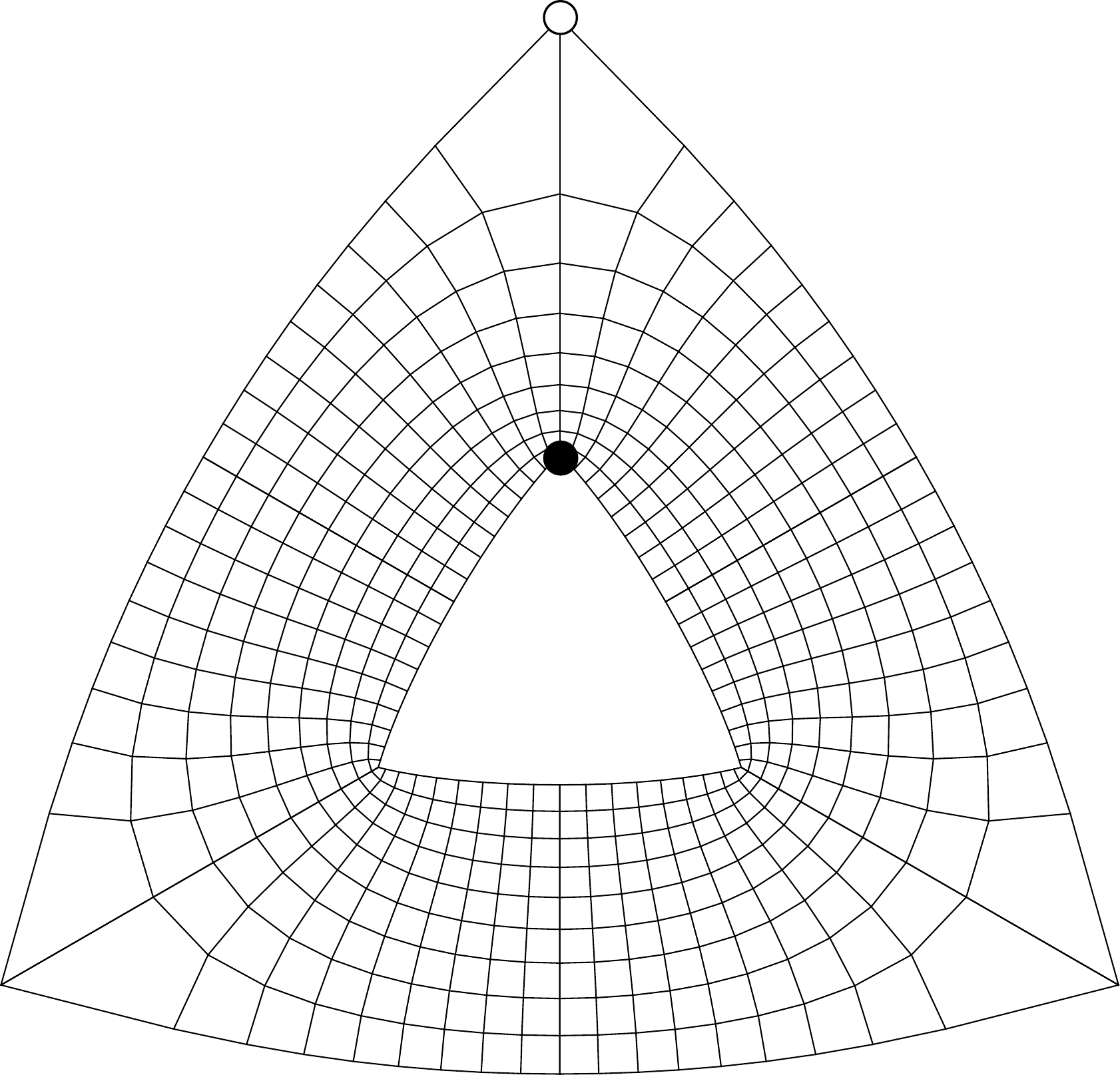}
			\includegraphics[width=0.3\textwidth]{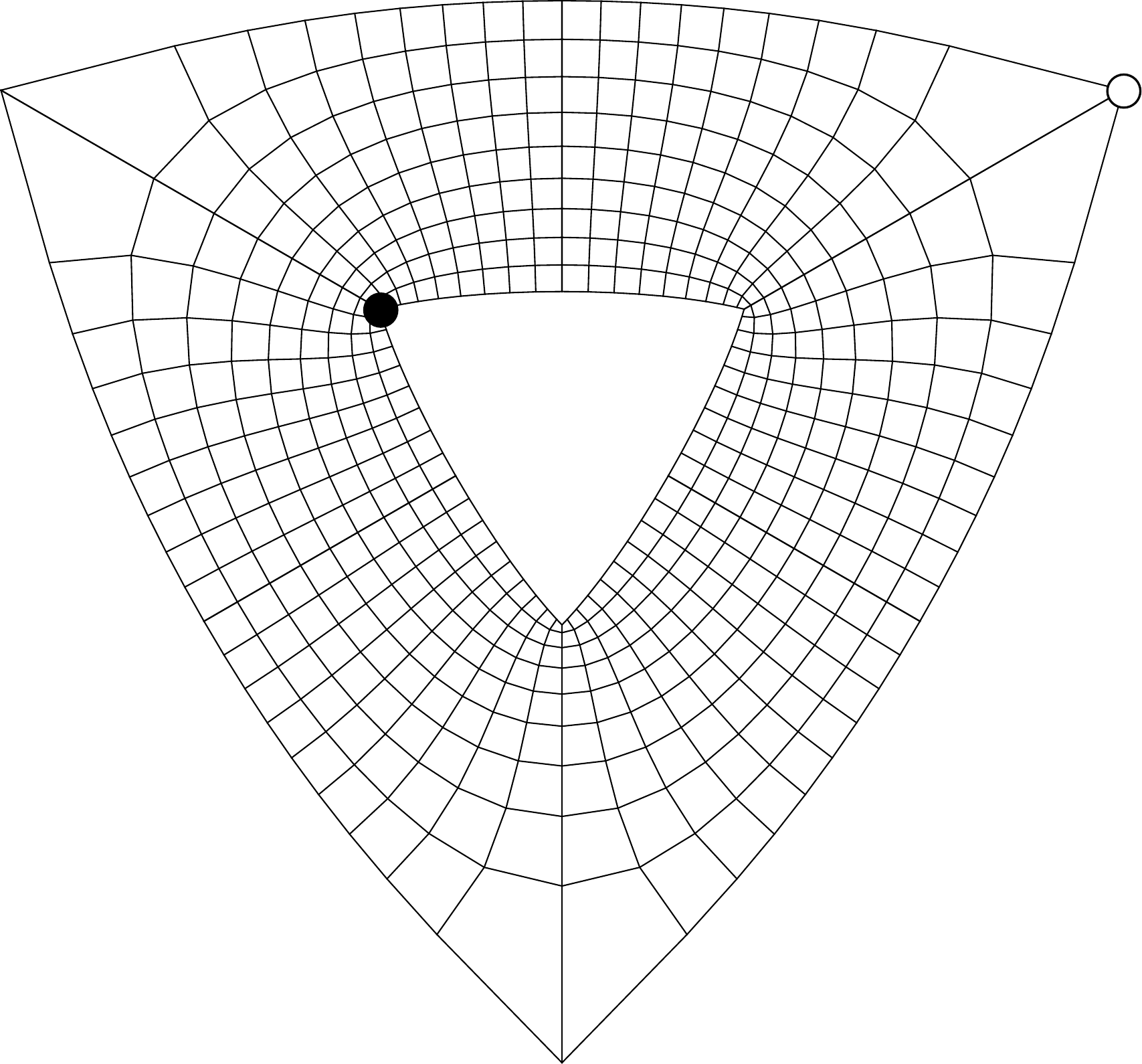}
			\caption{
				Image of $\int G \cdot dz$ when $\re\tau = 0$ (left) and when $\re\tau =
				1$.  The solid circle is the image of $0$, the empty circle is the image
				of $\tau/3$.
			}
			\label{fig:HTflat}
		\end{figure*}

		As $\im\tau \to \infty$, we see immediately that $\theta_v \to 0^+$.  In
		Appendix~\ref{supp-mat-asympt-behav}, we have compute that $\theta_h
		\to (1/2-\re\tau/3)\pi$.  Hence for any value of $\re\tau < 1$, we have
		$\theta_h > \theta_v$ as long as $\im\tau$ is sufficiently large.

		When $\tau$ is on the circular arc $|\tau-1/3| = 1/3$, the Gauss map
		exhibits many symmetries similar to CLP surfaces; see Figure~\ref{fig:arcs}
		(left).  In particular, $G$ is real positive on the straight segment from
		$0$ to $\tau/3$, which gives immediately that $\theta_h = \arg \psi(\tau) =
		\arg \tau$.  But it follows from elementary geometry that $\theta_v >
		\arg\tau = \theta_h$.

		\begin{figure*}
			\includegraphics[width=0.4\textwidth]{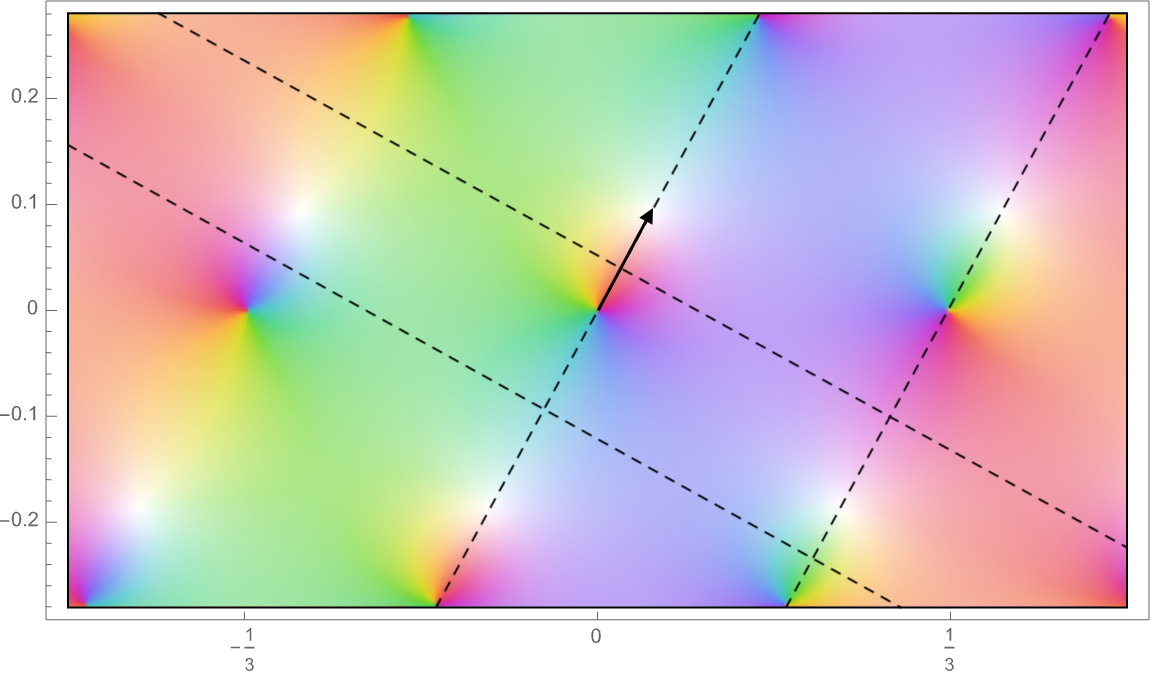}
			\includegraphics[width=0.4\textwidth]{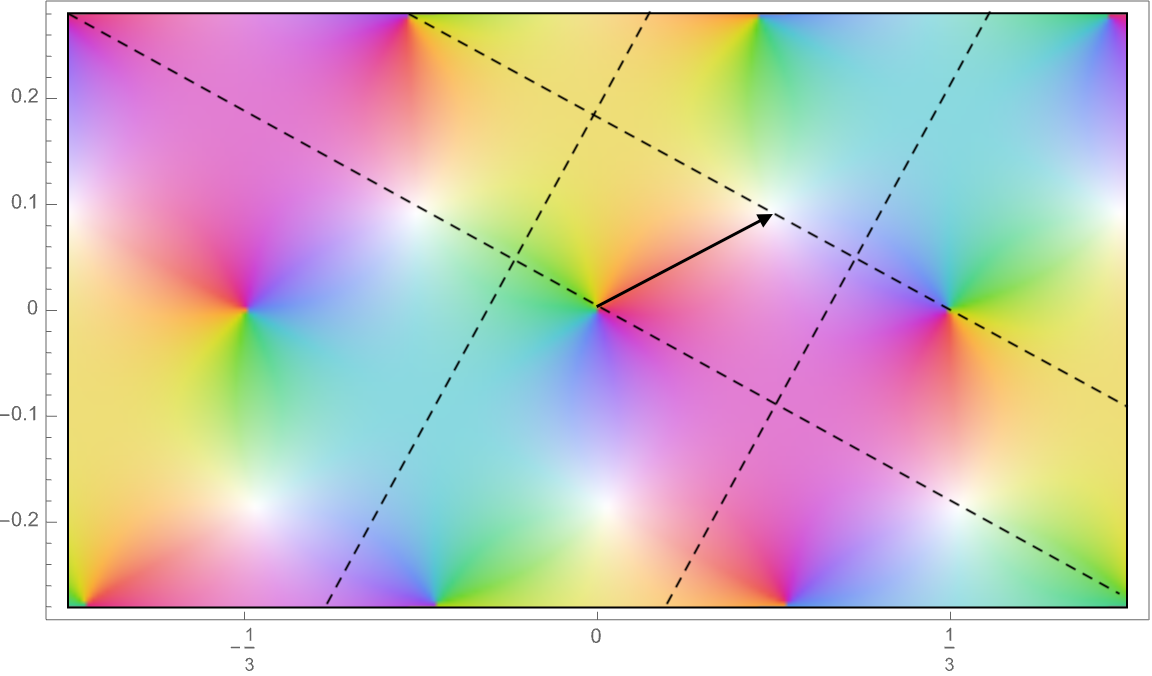}
			\caption{
				Plot of $G^2$ for with $|\tau - 1/3| = 1/3$ (left) and $|\tau-2/3| =
				1/3$.  The arrow points from $0$ to $\tau/3$.  The dashed lines
				highlight the symmetries.
			}
			\label{fig:arcs}
		\end{figure*}

		When $\tau$ is on the circular arc $|\tau-2/3| = 1/3$, the Gauss map again
		exhibits many symmetries, similar to the previous case; see
		Figure~\ref{fig:arcs} (right).  In particular
		\begin{itemize}
			\item on the straight segment from $0$ to $(1-\tau)/3$, $\arg(G) = -\pi/6$ and $|G| < 1$;
			\item on the straight segment from $(1-\tau)/3$ to $(1+\tau)/6$, $|G|=1$ and $\arg(G)$ decrease from $-\pi/6$ to $-\pi/3$;
			\item on the straight segment from $(1+\tau)/6$ to $\tau/3$, $\arg(G) = -\pi/3$ and $|G| > 1$;
			\item on the straight segment from $\tau/3$ to $(2\tau - 1)/3$, $\arg(G) = \pi/6$ and $|G| > 1$;
			\item on the straight segment from $(2\tau - 1)/3$ to $-(1-\tau)/6$, $|G|=1$ and $\arg(G)$ increase from $\pi/6$ to $\pi/3$;
			\item on the straight segment from $-(1-\tau)/6$ to $0$, $\arg(G) = \pi/3$ and $|G| < 1$.
		\end{itemize}
		The region bounded by these segments is exactly the region bounded by the
		dashed lines in Figure~\ref{fig:arcs} (right).

		This allows us to explicitly sketch the image of the Weierstrass
		parametrization with $\theta = \arg(1-\tau) = \theta_v - \pi/2$.  Up to
		translation and scaling, it is bounded by six straight segments between the
		points
		\[
			(0, 2, h),\quad (-\sqrt{3}, 1, h),\quad (0, -2, h),\quad (0, -2, 0),\quad (-\sqrt{3}, -1, 0),\quad (0, 2, 0),
		\]
		in this cyclic order.  See Figure~\ref{fig:CLP} (left).  Then by
		conjugation, we can sketch the image with $\theta = \arg(\tau/2 - 1/6) =
		\theta_v$.  It is a minimal surface with free boundary condition on the
		faces of a prism over a kite; see Figure~\ref{fig:CLP} (right).  We see that
		the $y$-coordinates of the images of $0$ and $\tau/3$ switched order,
		implying that $\theta_h \in (\theta_v - \pi/2 , \theta_v)$.  In particular,
		$\theta_h < \theta_v$.

		\begin{figure*}
			\includegraphics[width=0.2\textwidth]{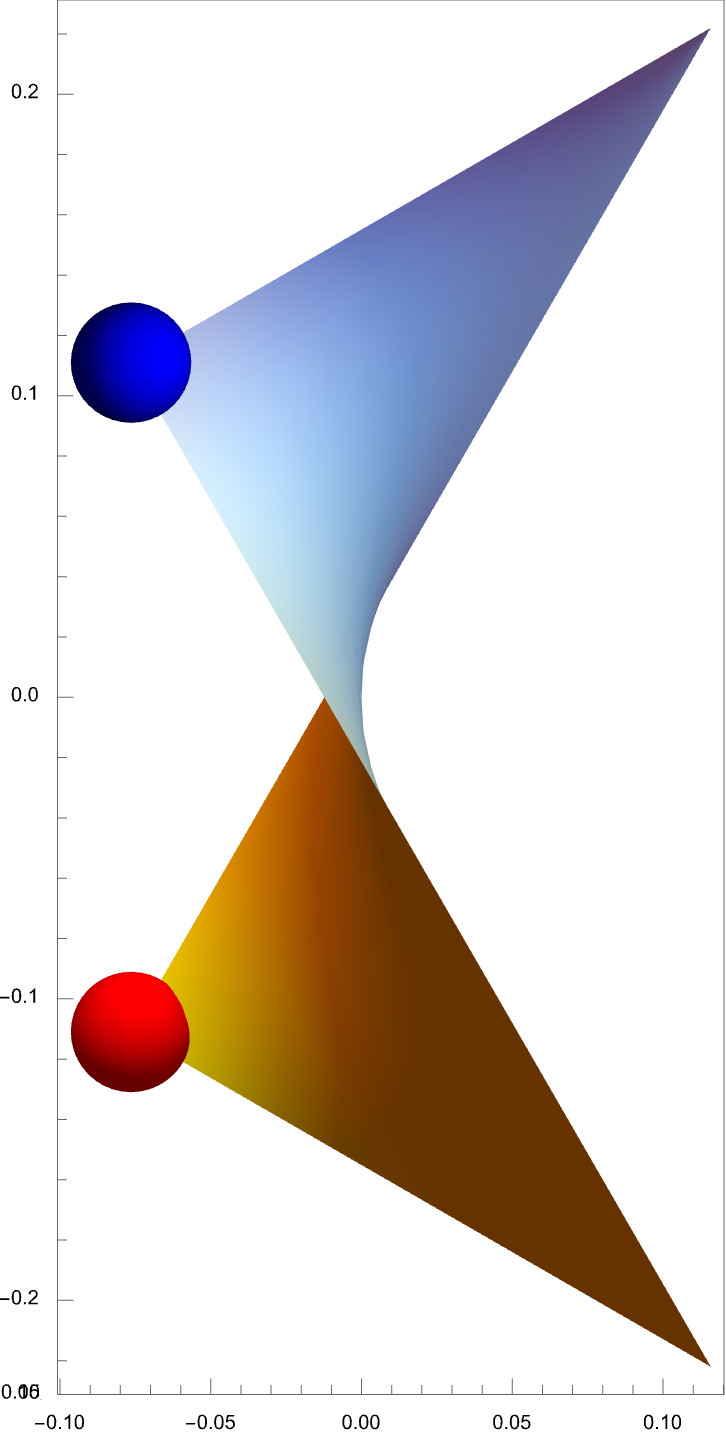}
			\includegraphics[width=0.2\textwidth]{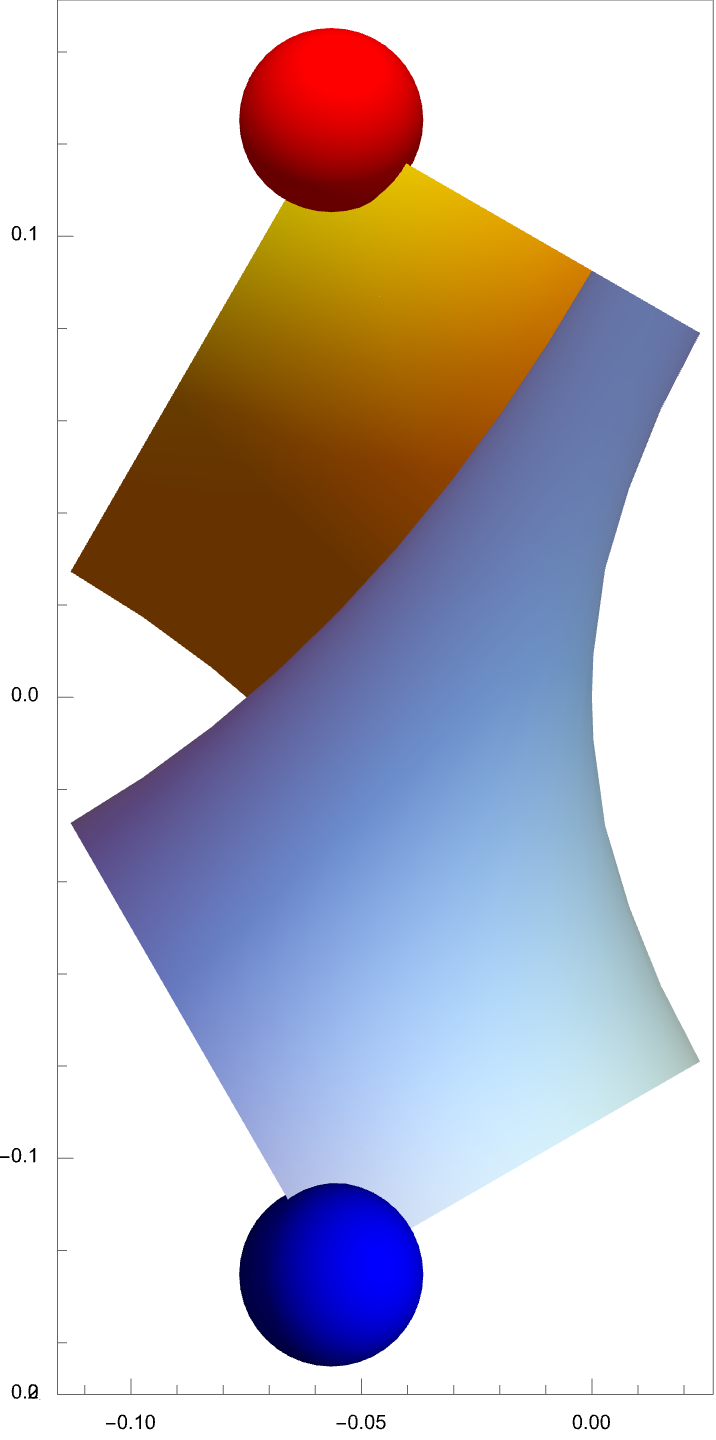}
			\caption{
				Image of Weierstrass parametrization assuming $\theta = \theta_v -
				\pi/2$ (left) and $\theta = \theta_v$ (right).  Both seen from above.
				The red point is the image of $0$.  The blue point is the image of
				$\tau/3$.
			}
			\label{fig:CLP}
		\end{figure*}

		Note that $\theta_h$ and $\theta_v$ are both real analytic functions in the
		real and imaginary part of $\tau$, hence the solution set of the period
		condition~\eqref{eq:period} is an analytic set.  By the continuity, we
		conclude that the solution set contains a connected component that separates
		the circular arcs from the vertical lines and the infinity.  Because of the
		analyticity, we may extract a continuous curve from the connected component.
		In particular, this curve must tend to $0$ in one limit (correspond to
		Karcher--Scherk saddle towers with six wings), and tend to $1$ in the other
		limit (correspond to doubly periodic Scherk surfaces).

		Surfaces with the same symmetry near the saddle tower limit have been
		constructed in~\cite{chen2024} by gluing constructions.  They are proved to
		be embedded and unique, hence must belong to the curve above near the limit
		0.  Their embeddedness then ensures the embeddedness of all TPMSs on the
		curve.  This follows from~\cite[Proposition~5.6]{weyhaupt2006}, which was
		essentially proved in~\cite{meeks1990}.
	\end{proof}

	\end{document}